\newtheorem{teorema}{Theorem}
\newtheorem{theorem}[teorema]{Theorem}
\newtheorem{prop}[teorema]{Proposition}
\newtheorem{lemma}[teorema]{Lemma}
\newtheorem{definition}[teorema]{Definition}
\newtheorem{guess}[teorema]{Remark}
\newtheorem{example}[teorema]{Example}
\newenvironment{oss}{\begin{guess} \begin{rm}}{\end{rm} \end{guess}}
\newtheorem{proposition}[teorema]{Proposition}
\newtheorem{corollary}[teorema]{Corollary}
\newcommand{\F}{\mathcal F}
\newcommand{\PP}{\mathbb P}
 \newcommand{\N}{\mathbb N}
 \newcommand{\T}{\mathbb T}
 \newcommand{\Z}{\mathbb Z}
 \newcommand{\R}{\mathbb R}
 \newcommand{\cF}{\mathcal F}
\newcommand{\cH}{\mathcal H}
  \newcommand{\cL}{\mathcal L}
\newcommand{\hcL}{\hat{\mathcal L}}
\newcommand{\parts}{\mathscr{P}}
 \newcommand{\af}{\alpha}
\newcommand{\be}{\beta}
 \newcommand{\ga}{\gamma}
  \newcommand{\lam}{\lambda}
  \newcommand{\Lam}{\Lambda}
\newcommand{\tr}{\operatorname{Tr}}
\def\AND{\text{ and }}
\def\FORALL{\text{ for all }}
\def\du#1{\langle#1\rangle}
\newcommand{\sdy}{Y_{x}}
\newcommand{\Exp}{\mathbb E}
\def\N{\mathbb{N}}
\def\Z{\mathbb{Z}}
\def\R{\mathbb{R}}
\def\T{\mathbb{T}}
\def\qed{\hfill$\square$}
\def\S{\mathcal{S}}
\def\Leg{\mathcal{L}}
\newcommand{\eps}{\varepsilon}
\newcommand{\Bor}{\mathscr{B}}
\newcommand{\D}[1]{\mbox{\rm #1}}
\newcommand{\dd}{\D{d}}
\newcommand{\cont}{\D{C}}
\newcommand{\e}{\operatorname{e}}
\newcommand{\Lip}{\D{Lip}}
\title{Discrete approximation of the viscous HJ equation}
\author[A. Davini]{Andrea Davini}
\address{Dipartimento di Matematica \\  {Sapienza} Universit\`a di Roma\\
P.le Aldo Moro 2, 00185 Roma, Italy}
\email{davini@mat.uniroma1.it}
\author[H. Ishii]{Hitoshi Ishii}
\address{Institute for Mathematics and Computer Science\\ Tsuda  University \\ 
 2-1-1 Tsuda, Kodaira, Tokyo 187-8577 Japan}
\email{hitoshi.ishii@waseda.jp}
\author[R. Iturriaga]{Renato Iturriaga}
 \address{ CIMAT\\
  A.P. 402, 3600\\
  Guanajuato, Gto. M\'{e}xico.}
\email{renato@cimat.mx}
\author[H. Sanchez]{Hector Sanchez Morgado}
 \address{Instituto de Matematicas\\
    Universidad Nacional Autonoma de Mexico, Mexico.}
 \email{hector@matem.unam.mx}
\thanks{{The work of AD, HI, and RI was partially supported by the NSF Grant No. 1440140 and the work of  HI was partially supported by the JSPS grants: KAKENHI
    \#16H03948, \#18H00833.}  The work of  HSM was partially supported by a PASPA sabatical grant from DGAPA-UNAM.  {HI thanks Hiroyoshi Mitake for his sharing 
the idea of the proof of Theorem \ref{teo eq viscous HJ}-(i) below}}
\begin{document}
\maketitle
%
%Take $\af>0$ and  define the linear operator $\Leg_{\tau}:\D{C}(\T^d)\to\D{C}(\T^d)$ as  
%
%\begin{equation}\label{def Leb tau}
%  \Leg_\tau  u(x) := \sup_{q\in\R^d} \left (-\tau L(x, q) +e^{-\af \tau}\int
%  u(x+\tau q+z)\eta^\tau(z)dz\right)\qquad \forall x\in \R^d
%\end{equation}
%where $\eta^\tau(z)=\dfrac{e^{-\frac{|z|^2}{4\tau}}}{(4\pi\tau)^{\frac d2}}$.
%
%Now consider the Lax semigroup
%
%\begin{equation}\label{Lax}
%  \L^T u(x):=\sup_v \E\Bigl(e^{-\af T}u(X(T))-\int_0^T
%e^{-\af s}L(X_\ep(s),v(s))ds\Bigr),
%  \end{equation}
%where $v$ is an admissible progressively measurable control process,
%and $X$ is the solution to  the stochastic differential equation
%\begin{equation}
%  \label{eq:stoc}
%\begin{cases}
%  dX(t) &=v(t)dt+\sqrt{2}\,dW(t)\\
%X(0)&=x.
% \end{cases}
%\end{equation}
%
%{\bf We want}: Fixed $\af>0$, $T>0$ taking $\tau\to 0$ and $N=N(\tau)\in\N$ such that
%$N\tau\to T$ we have that 
%\[\lim_{\tau\to 0}\Leg^N_\tau u=\L^Tu\]
%uniformly on each $\{u:\|u\|\le R\}$.
%\end{document}
\begin{abstract}
  We consider a stochastic discretization of the stationary viscous
  Hamilton-Jacobi equation on the flat $d$--dimensional torus $\T^d$ associated with a  Hamiltonian, 
  convex and superlinear in the momentum variable. 
  We show that each discrete problem admits a unique continuous solution on $\T^d$, up to additive constants.  
  By additionally assuming a technical condition on the associated Lagrangian, 
  we show that each solution of the viscous Hamilton--Jacobi equation 
  is the limit of solutions of the discrete problems, as the 
  discretization step goes to zero. 
\end{abstract}
\section*{Introduction}

Several authors have considered the approximation of the value
function in continuous time Optimal Control by means of the
value function given by a discrete time Dynamical
Programming Principle. The convergence of this approximation
is in fact the basis for computational methods of solution of the
corresponding Hamilton-Jacobi-Bellman equation. We can mention
the book \cite{bardi} and the articles \cite{CCDG,CD,CDI,CDF,G,ST} 
where convergence is proved on different settings.

In this paper, we propose a stochastic version of this discretization so to approximate the solutions of a 
viscous Hamilton--Jacobi equation of the kind
\begin{equation}\label{intro eq HJ}
-\Delta u +H(x,Du)=\alpha_0\qquad\hbox{in $\T^d$,}
\end{equation}
where $\T^d$ is the flat $d$--dimensional torus and the Hamiltonian $H:\T^d\times\R^d\to\R$ is a continuous function, convex and superlinear 
in the momentum variable. Under suitable assumptions on $H$, there is a unique real constant $\alpha_0$ such that equation 
\eqref{intro eq HJ} admits solutions in the viscosity sense. 
This constant $\alpha_0$ is often termed {\em ergodic constant} or {\em Ma\~ne critical value}. 
Furthermore, solutions to \eqref{intro eq HJ} are unique, up to additive constants, and are of class $C^2$, hence they solve 
the equation \eqref{intro eq HJ} in the classical sense. 
% Up to the best of our knowledge, a stochastic version of this approximation has not
% been yet considered.

Solutions to \eqref{intro eq HJ} can be also regarded as fixed points, for every $t>0$, of the operator $\S(t):\cont(\T^d)\to\cont(\T^d)$, defined on 
the space $\cont(\T^d)$ of continuous $\Z^d$--periodic function on $\R^d$, as follows: 
\begin{equation}\label{eq viscous representation}
\left(\S(t)u\right)(x)=\inf_{v}\Exp\left[ u(\sdy(t))+\int_0^t \big(\,L(\sdy(s),-v(s))+\alpha_0\big)\,\dd s    \right]
\end{equation}
for every $x\in \T^{N}$ and $t>0$. Here $L$ is the Lagrangian associated to $H$ via the Legendre-Fenchel transform, 
$v:[0,\infty)\times\Omega\to\R^{N}$ is a control process satisfying suitable measurability conditions and 
$\sdy$ is the solution of the following Stochastic Differential Equation
\begin{align}\label{SDE}
\begin{cases}
d\sdy (t)=v(t)\, dt +\sqrt{2}\, d W_{t}\smallskip\\
\sdy(0)=x,
\end{cases}
\end{align}
where $W_{t}$ denotes a standard Brownian motion on $\R^d$, defined on a probability space $(\Omega,\F,\PP)$.  
In the formula \eqref{eq viscous representation}, the symbol $\Exp$ stands for the expectation with respect to 
the probability measure $\PP$ and the minimization is performed 
by letting $v$ vary in a proper class of admissible control processes.  

Motivated by this control theoretic interpretation of the viscous Hamilton--Jacobi equation, we consider the following discretization 
of the above formula \eqref{eq viscous representation}: for every fixed $\tau>0$, we introduce an operator 
$\Leg_{\tau}:\D{C}(\T^d)\to\D{C}(\T^d)$ defined as follows:
\begin{equation*}\label{intro def Leb tau}
\Leg_\tau  u(x) := \min_{q\in\R^d}\big( \tau L (x, -q) +(\eta^\tau*u)(x+\tau q)\big)\qquad\hbox{for every $x\in \R^d$,}  
\end{equation*}
where $\eta^{\tau}*u$ is the convolution of the function $u$ with the
heat kernel
\[\eta^\tau(y):=(4\pi\tau)^{-\frac d2}\e^{-\frac{|y|^2}{4\tau}}.\] 

As a preliminary fact we prove

\begin{theorem}\label{teo existence alpha tau}
Let $L:\T^d\times\R^d\to\R$ satisfy conditions {\bf (L1)-(L2)} below. Then  
there exists a unique constant $\alpha_\tau\in\R$ for which the equation 
\begin{equation}\label{eq discrete HJ}
\Leg_\tau u= u-\tau\alpha_\tau\qquad \hbox{in $\T^d$}
\end{equation}
admits a solution $u\in \D{C}(\T^d)$. {Furthermore, solutions are unique, up to additive constants.}
\end{theorem}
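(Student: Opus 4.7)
My plan is a vanishing-discount argument: for each small $\eps>0$ I solve a contracting, \emph{discounted} version of the fixed-point equation and then extract a limit as $\eps\to 0^{+}$, which delivers simultaneously the constant $\alpha_\tau$ and a solution $u$. The decisive input is that the periodization $\bar\eta^\tau:=\sum_{k\in\Z^d}\eta^\tau(\cdot+k)$ of the heat kernel on $\T^d$ is bounded below by some $\eta_*=\eta_*(\tau)>0$; this quantitative mixing estimate plays the role that strict ellipticity does in the PDE analogue.

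I first record the elementary properties of $\Leg_\tau$: the superlinearity of $L$ from \textbf{(L2)} ensures the minimum in the definition is attained, and $\Leg_\tau$ maps $\cont(\T^d)$ into itself, is monotone, and commutes with the addition of real constants. For existence, fix $\eps\in(0,1/\tau)$ and set
\[
T_\eps u(x):=\min_{q\in\R^d}\bigl[\,\tau L(x,-q)+(1-\eps\tau)(\eta^\tau*u)(x+\tau q)\,\bigr],
\]
which is a $(1-\eps\tau)$-contraction on $(\cont(\T^d),\|\cdot\|_\infty)$ and therefore admits a unique fixed point $u_\eps$. Evaluating the fixed-point relation at extrema with $q=0$ gives the crude bound $\eps\|u_\eps\|_\infty\le C$. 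The crucial step is a \emph{uniform} bound on $\operatorname{osc}(u_\eps):=\max u_\eps-\min u_\eps$: at a maximum point $x_M$, using $u_\eps\le M_\eps:=\max u_\eps$ together with $\bar\eta^\tau\ge\eta_*$,
\[
(\eta^\tau*u_\eps)(x_M)\le M_\eps-\eta_*(M_\eps-\bar u_\eps),\qquad \bar u_\eps:=\int_{\T^d} u_\eps,
\]
and inserting this into $T_\eps u_\eps=u_\eps$ gives $M_\eps-\bar u_\eps\le C$; a symmetric argument at a minimum point yields $\bar u_\eps-\min u_\eps\le C$, hence $\operatorname{osc}(u_\eps)\le C$ uniformly. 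Differentiating under the convolution then shows $\eta^\tau*u_\eps$ is uniformly Lipschitz, and superlinearity of $L$ combined with $q=0$ as a test value forces the optimal $q_x$'s to be uniformly bounded; feeding these bounds back into the equation yields a uniform modulus of continuity for $\tilde u_\eps:=u_\eps-\bar u_\eps$. Rewriting the fixed-point identity as
\[
\tilde u_\eps(x)+\eps\tau\bar u_\eps=\min_{q\in\R^d}\bigl[\,\tau L(x,-q)+(1-\eps\tau)(\eta^\tau*\tilde u_\eps)(x+\tau q)\,\bigr]
\]
and passing to the limit along a subsequence (Arzel\`a--Ascoli on $\tilde u_\eps$, boundedness of $\eps\bar u_\eps$) produces $u\in\cont(\T^d)$ and $\alpha_\tau:=-\lim\eps\bar u_{\eps}$ with $\Leg_\tau u=u-\tau\alpha_\tau$.

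For uniqueness, let $(\alpha_1,u_1)$ and $(\alpha_2,u_2)$ both solve the equation, and let $x_0$ realize $M:=\max(u_1-u_2)$. Using an optimizer $q_2$ for $u_2$ at $x_0$ as a test value in the minimum defining $\Leg_\tau u_1(x_0)$ and subtracting,
\[
M\le(\eta^\tau*(u_1-u_2))(x_0+\tau q_2)+\tau(\alpha_1-\alpha_2)\le M+\tau(\alpha_1-\alpha_2),
\]
which forces $\alpha_1\ge\alpha_2$; swapping the roles of the two solutions yields $\alpha_1=\alpha_2=:\alpha_\tau$, proving uniqueness of the constant. The inequalities above then become equalities, and since $u_1-u_2\le M$ everywhere and $\bar\eta^\tau>0$ on $\T^d$, this forces $u_1-u_2\equiv M$. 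The main obstacle is the uniform oscillation bound on $u_\eps$: this is the only place where the \emph{viscous/stochastic} character of the problem enters in an essential way, via the mixing estimate $\bar\eta^\tau\ge\eta_*>0$. The remaining steps — contraction mapping, compactness, and comparison at extrema — follow the well-established pattern for discrete ergodic fixed-point problems.
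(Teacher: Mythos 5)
Your proposal is correct, and although it belongs to the same family as the paper's second proof of Theorem \ref{teo existence alpha tau} (a vanishing-discount scheme resolved by Banach's fixed point theorem; the paper's first proof uses instead Schauder's theorem on the quotient of $\cont(\T^d)$ by constants), the compactness mechanism is genuinely different. The paper discounts the whole operator, solving $(1+\delta)u=\Leg_\tau u$, and gets equicontinuity of the approximations directly from Proposition \ref{prop a priori compactness}, which asserts that $\Leg_\tau v$ is $\kappa_\tau$--Lipschitz for \emph{every} $v\in\cont(\T^d)$ (periodicity lets one restrict the minimization to a fixed compact set of $q$'s, and one then uses the regularity of $L$ there). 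You instead place the discount on the convolution term only and extract compactness from the strict positivity of the periodized heat kernel: the Doeblin-type bound $\bar\eta^\tau\ge\eta_*>0$ yields the uniform oscillation bound, after which the smoothing of the convolution (note one should write $D(\eta^\tau*u_\eps)=D\eta^\tau*(u_\eps-\bar u_\eps)$ so that the Lipschitz bound uses the oscillation bound rather than $\|u_\eps\|_\infty$, which is only $O(1/\eps)$) together with the uniform boundedness of the optimal $q$'s gives the uniform modulus of continuity. A payoff of your version is that it only needs a modulus of continuity of $L$ on compact sets, whereas the proof of Proposition \ref{prop a priori compactness} invokes a Lipschitz constant of $L$ on a compact set; the mild price is some bookkeeping you should make explicit: restrict to $\eps\le 1/(2\tau)$ so the factors $1-\eps\tau$ stay away from $0$, and observe that the test value $q=0$ gives only the upper bound $\eps\max u_\eps\le\max_x L(x,0)$, the matching lower bound $\eps\min u_\eps\ge\min_{\T^d\times\R^d} L$ coming from the optimal $q$ and $L\ge\min L$ — minor points that do not affect the argument. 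Your uniqueness step (comparison at a maximum point of $u_1-u_2$, using the optimizer of one solution as a test value for the other, then the full support of the kernel to propagate the equality) is essentially identical to the paper's Theorem \ref{teo unique alpha tau}.
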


Our main result is the following. 

\begin{theorem}\label{teo asymptotic}
Assume  that $L:\T^d\times\R^d\to\R$ satisfy conditions {\bf (L1)-(L2)} below, together with 
\begin{equation}\label{L3}
L(x+h,q+k)+L(x-h,q-k)-2L(x,q)\leq M_\gamma (|h|^\gamma+|k|)	  \ \ \tag{\bf L3}
\end{equation}
for all $(x,q)\in\T^d\times\R^d$ and $h,k\in \overline B_{R_d}$ with $R_d:=\sqrt{d}/2$, for some constants $\gamma\in(0,\,1)$ and $M_\gamma >0$.
Let $x_0\in\T^d$ be fixed and denote by $u_\tau$ the unique solution of
\[
\Leg_\tau u= u-\tau\alpha_\tau\qquad \hbox{in $\T^d$}
\]
such that $u_\tau(x_0)=0$.
Then the family $\{ u_\tau\,\mid\,\tau\in (0,1)\}$ is equi--bounded and equi--continuous in $\cont(\T^{d})$
and $(\alpha_\tau,u_\tau)$ converges to $(\alpha_0,u)$ in $\R\times\D{C}(\T^d)$, as $\tau\to 0$, where 
$u$ is the unique viscosity solution to
\begin{equation}\label{eq2 viscous HJ}
-\Delta u +H(x,Du)=\alpha_0\qquad\hbox{in $\T^d$},
\end{equation}
satisfying $u(x_0)=0$.
\end{theorem}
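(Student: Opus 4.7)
The plan is to prove three things in sequence: (i) convergence of the ergodic constants, $\alpha_\tau\to\alpha_0$; (ii) uniform boundedness and uniform semiconcavity of the $u_\tau$, which together give equi-continuity on the compact torus; (iii) identification of any uniform limit as the required viscosity solution. The critical difficulty lies in the uniform semiconcavity of step (ii).

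For step (i), I plug a classical $C^2$ solution $u$ of \eqref{eq2 viscous HJ} (which exists under {\bf (L1)}--{\bf (L2)}) into $\Leg_\tau$. Taylor-expanding the Gaussian convolution as $(\eta^\tau\ast u)(y)=u(y)+\tau\Delta u(y)+o(\tau)$, and observing that the inner minimisation over $q$ converges to $-H(x,Du(x))$, one obtains $\Leg_\tau u(x)=u(x)-\tau\alpha_0+o(\tau)$ uniformly on $\T^d$. Comparing with the fixed-point relation $\Leg_\tau u_\tau=u_\tau-\tau\alpha_\tau$ and using uniqueness of $\alpha_\tau$ from Theorem \ref{teo existence alpha tau} gives $\alpha_\tau\to\alpha_0$; in particular the family $\{\alpha_\tau\}$ is bounded.

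For step (ii), I iterate the equation. The identity $\Leg_\tau^n u_\tau=u_\tau-n\tau\alpha_\tau$ unwinds into the stochastic representation
\[u_\tau(x)=\inf_{q_1,\dots,q_n}\E\Big[\sum_{k=1}^n\tau L(X_{k-1},-q_k)+u_\tau(X_n)\Big]+n\tau\alpha_\tau,\]
where $X_k-X_{k-1}=\tau q_k+\sqrt{2\tau}\,\xi_k$ with i.i.d.\ standard Gaussians $\xi_k$; this is the Euler--Maruyama scheme for \eqref{SDE} on the horizon $T:=n\tau$. Choosing constant-drift controls that send trajectories from different starting points to a common Gaussian target (using the compactness of $\T^d$) yields a uniform oscillation bound on $u_\tau$, which together with $u_\tau(x_0)=0$ gives a uniform $L^\infty$ bound. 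For the semiconcavity, testing the $x$-minimiser against the problems at $x\pm h$ and using \eqref{L3} with $k=0$ to bound each Lagrangian increment by $\tau M_\gamma|h|^\gamma$ produces
\[u_\tau(x+h)+u_\tau(x-h)-2u_\tau(x)\le T M_\gamma|h|^\gamma+\E[u_\tau(X_n+h)+u_\tau(X_n-h)-2u_\tau(X_n)].\]
Conditional on the controls, $X_n$ has Gaussian density $\eta^T$; a direct estimate $\|D^2(\eta^T\ast u_\tau)\|_\infty\le C\|u_\tau\|_\infty/T$ bounds the expectation by $C\|u_\tau\|_\infty|h|^2/T$. Optimising in $T$ yields a $\tau$-independent semiconcavity modulus of order $\|u_\tau\|_\infty^{1/2}|h|^{1+\gamma/2}$, which, combined with the uniform $L^\infty$ bound, delivers equi-continuity on $\T^d$.

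For step (iii), Arzel\`a--Ascoli extracts a subsequence $u_{\tau_k}\to\bar u$ uniformly, with $\alpha_{\tau_k}\to\alpha_0$ from step (i). A standard perturbed-test-function argument identifies $\bar u$ as a viscosity solution of \eqref{eq2 viscous HJ}: given a smooth $\varphi$ such that $\bar u-\varphi$ attains a strict maximum at $\bar x$, maximisers $x_k$ of $u_{\tau_k}-\varphi$ converge to $\bar x$; inserting the inequality $u_{\tau_k}(y)\le u_{\tau_k}(x_k)+\varphi(y)-\varphi(x_k)$ into the fixed-point relation at $x_k$ and Taylor-expanding the convolution and the Lagrangian term in $\tau$, then passing to the limit in the inner infimum, yields $-\Delta\varphi(\bar x)+H(\bar x,D\varphi(\bar x))\le\alpha_0$; the reverse inequality is obtained symmetrically. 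Uniqueness of the normalised solution of \eqref{eq2 viscous HJ} then forces the whole family to converge. The main obstacle, the uniform semiconcavity, is delicate because a one-step argument fails: the second-derivative bound of $\eta^\tau$ diverges as $\tau\to 0$. The iterated scheme resolves this by accumulating the Gaussian smoothing over a macroscopic time $T$ independent of $\tau$, and \eqref{L3} is precisely the H\"older control that keeps the accumulated Lagrangian cost $TM_\gamma|h|^\gamma$ at the scale needed to balance the regularising term $\|u_\tau\|_\infty|h|^2/T$.
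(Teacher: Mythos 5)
Your architecture (convergence of the constants, equi-continuity, stability/consistency identification of the limit) is sound, and steps (i) and (iii) can be carried out — (iii) is essentially the paper's argument, and your direct proof of $\alpha_\tau\to\alpha_0$ in (i) is a viable alternative to the paper's route, provided you make explicit the discrete comparison showing that an approximate sub/supersolution with constant $\alpha_0\pm o(1)$ pins $\alpha_\tau$ between $\alpha_0-o(1)$ and $\alpha_0+o(1)$. The genuine gap is in step (ii), which is the heart of the theorem. After iterating the scheme you correctly arrive at
\[
u_\tau(x+h)+u_\tau(x-h)-2u_\tau(x)\le T M_\gamma|h|^\gamma+\E\big[u_\tau(X_n+h)+u_\tau(X_n-h)-2u_\tau(X_n)\big],
\]
but the next assertion — that $X_n$ has Gaussian law $\eta^T$, so that the expectation is bounded by $C\|u_\tau\|_\infty|h|^2/T$ — is false in general. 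The infimum defining $\Leg_\tau^n u_\tau$ is over \emph{adapted} (feedback) controls: the near-optimal $q_k$ depends on $X_{k-1}$, hence on the past noise, and such a control can cancel the noise rather than let it accumulate. For instance the feedback $q_k=(z-X_{k-1})/\tau$ gives $X_k=z+\sqrt{2\tau}\,\xi_k$, so the law of $X_n$ has variance $2\tau$ for every $n$. All one can guarantee is that the law of $X_n$ is the law of $X_{n-1}+\tau q_n$ convolved with $\eta^\tau$, so your smoothing estimate degrades to $C\|u_\tau\|_\infty|h|^2/\tau$ — precisely the one-step bound you yourself identified as insufficient. The same problem undermines the preliminary oscillation bound: steering all starting points to a common Gaussian target only produces upper bounds for $u_\tau(x)$ in terms of the same smoothed average; to control the oscillation you must compare against the optimal (again feedback) trajectory of the competitor point, which this coupling does not handle. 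Hence neither the uniform $L^\infty$ bound nor the uniform semiconcavity, and therefore not the equi-continuity, is established.

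The paper obtains the uniform modulus from a different mechanism, which does not rely on noise accumulation at all. It works with the second-difference seminorm $\Theta_\gamma(u)$ and the factorization $\cL_\tau=\cF_\tau\circ\cE_\tau$: the heat convolution does not increase $\Theta_\gamma$ (Lemma \ref{heat}), while for the deterministic Lax--Oleinik step one tests the minimizer at $x$ against $x\pm h$ using \eqref{L3} with the \emph{shifted} increment $k=h/(1+\tau)$ — not $k=0$ — so that the offset in the terminal term is contracted from $h$ to $h/(1+\tau)$ at a cost of only $\tau M_\gamma\big(|h|^\gamma+|h|/(1+\tau)\big)$. This yields $\Theta_\gamma(\cL_\tau u)\le (1+\tau)^{-\gamma}\Theta_\gamma(u)+\tau B_\tau$ (Theorems \ref{main-ineq} and \ref{cL}), and the fixed-point relation \eqref{eq u tau} then gives $\Theta_\gamma(u_\tau)\le \tau B_\tau/(1-(1+\tau)^{-\gamma})$, which stays bounded as $\tau\to0$ (Theorem \ref{main-est}); Proposition \ref{prop-equiv} converts this into a uniform $C^{0,\gamma}$ bound, whence equi-continuity and, with the normalization $u_\tau(x_0)=0$, equi-boundedness. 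Note that the $|k|$ term in \eqref{L3} is exactly what makes this per-step contraction possible; your use of \eqref{L3} only with $k=0$ misses this mechanism, and without it the estimate cannot close. A smaller point for step (iii): in the supersolution half you need the minimizing velocities $q_n$ for $\Leg_{\tau_n}\varphi(x_n)$ to stay in a compact set before passing to the limit; the paper gets this from Proposition \ref{prop argmin L tau} together with $\|D(\eta^{\tau}*\varphi)\|_\infty\le\|D\varphi\|_\infty$.
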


The paper is organized as follows: Section \ref{sez preliminaries} contains the standing assumptions and some preliminary facts on the 
viscous Hamilton--Jacobi equation \eqref{intro eq HJ}. In Section \ref{sez discretization} we introduce the discrete operator $\Leg_\tau$ and study its 
main properties, in particular we prove Theorem  \ref{teo existence alpha tau}. In Section \ref{sez equicontinuity} we prove equi--continuity of the solutions 
of the discrete problems. Section \ref{sec:proof-main-result} is devoted to the proof Theorem \ref{teo asymptotic}, while Section \ref{sez examples} contains 
some examples for which the assertion of Theorem \ref{teo asymptotic} holds true.

\section{The viscous Hamilton--Jacobi equation}\label{sez preliminaries}
\numberwithin{equation}{section}
\numberwithin{teorema}{section}

Throughout the paper, we will call Lagrangian a continuous function $L:\R^d\to\R$, which is $\Z^d$--periodic in the space variable $x$. 
Equivalently, $L$ can be thought as defined on the tangent bundle $\T^d\times\R^d$ of the flat $d$--dimensional torus $\T^d$.  
We will assume $L$ to satisfy the following hypotheses:
\begin{itemize}\label{Tonelli}
\item[\bf (L1)] {\bf (Convexity)}\quad  for every  $x\in \R^d$, the map $q\mapsto L(x,q)$ is  
convex on $\R^d$.\smallskip
\item[\bf (L2)] {\bf (Superlinearity)}\quad $\displaystyle 
\inf_{x\in \R^d} \frac{L(x,q)}{|q|}\to +\infty\qquad\hbox{as $|q|\to +\infty$.}$
\end{itemize}
% 
% 
% \alert{Some changes below.}
% 
% \begin{oss}\label{oss H1-H2} Under condition (H1), 
% condition (H2) is equivalent to saying that $H$ is superlinear 
% in $p$ for every $x\in\T^d$. We
% deduce from (H1)
% \begin{equation}\label{lippo}
%   | H(x,p_1)-H(x,p_2)| \leq M_R |p-q| \quad\text{for all
%   $x\in\R^d$, and $p_1$, $p_2$ in $B_R$},
% \end{equation}
% where $M_R=\sup\{\, |H(x,p)|\,:\,x \in \R^d, \,|p|\leq R+2\,\},$ which is finite thanks to (H2). 
% \end{oss}
To any such Lagrangian, we can associate a Hamiltonian function $H:\R^d\times\R^d\to\R$ via  the {\em Legendre-Fenchel  transform}:
\begin{equation}\label{def L}
H(x,p):=\sup_{q\in\R^d}\left\{\langle p, q\rangle -
L(x,q)\right\}.
\end{equation}
Such a function $H$ is clearly $Z^d$--periodic in $x$. Furthermore, it satisfies convexity and superlinearity conditions analogous to {\bf (L1)} and {\bf (L2)}, 
to which we shall refer as  {\bf (H1)} and {\bf (H2)} in the sequel. Later in the paper, we will assume $L$ 
to satisfy the additional assumption \eqref{L3}. We shall see that this implies the following 
request on the associated Hamiltonian $H$, see Proposition \ref{H-Holder}:
\begin{itemize}
%\item[\bf (H1)] $\displaystyle p\mapsto H(x,p)-\frac 1{2M_\gamma}|p|^2$\quad is convex on $\R^d$;\smallskip
%
\item[\bf (H3)] there exists a constant {$K=K(\gamma)>0$} such that
  \[|H(x,p)-H(y,p)|\le K|x-y|^\ga\quad\hbox{ for any } x,y\in\T^d,
    p\in\R^d.
\]
\end{itemize}
%%
%Note that condition {\bf (H1)} implies that $H$ is convex and superlinear. Indeed, by convexity we have 
%\[
%H(x,p)-\frac 1{2M_\gamma}|p|^2 \geq H(x,0)+\langle p, \xi\rangle \quad\hbox{for all $p\in\R^d$}
%\]
%for any fixed vector $\xi$ belonging to the subdifferential of the convex function $p\mapsto H(x,p)-|p|^2/({2M_\gamma})$ at $0$ (it reduces to  
%$\partial_p H(x,0)$).

We will see that under the assumptions {\bf (H1)},{\bf\,(H2)},{\bf\,(H3)}, 
there is a unique real constant $\alpha_0$ for which the equation
\begin{equation}\label{eq viscous HJ}
-\Delta u +H(x,Du)=\alpha_0\qquad\hbox{in $\T^d$},
\end{equation}
admits viscosity solutions. Such solutions are actually of class $C^2$ and unique, up to additive constants.
Our goal is to perform a discrete approximation of the solution of \eqref{eq viscous HJ}.
In the sequel, we will use the notation $\cont(\T^d)$ to denote the family of continuous functions on $\T^d$, or, equivalently, the family of continuous and $\Z^d$--periodic functions on $\R^d$, endowed with the $L^{\infty}$--norm. 

We recall some basic facts about the viscous HJ equation \eqref{eq viscous HJ}. Let us begin with a definition.

\begin{definition}\label{def viscosity sol}
Let $v\in\cont(\T^d)$.
\begin{enumerate}[(i)]
 \item We will say that $v$ is a viscosity subsolution of \eqref{eq viscous HJ} if 
 \[
  -\Delta \varphi(x_0) +H\left(x_0,D\varphi(x_0)\right)\leq\alpha_0
 \]
 for every $\varphi\in\cont^2(\T^d)$ such that $v-\varphi$ has a local maximum at $x_0\in\T^d$. Such a function $\varphi$ will be called 
 {\em supertangent} to $v$ at $x_0$.\medskip
\item We will say that $v$ is a viscosity supersolution of \eqref{eq viscous HJ} if 
 \[
  -\Delta \varphi(x_0) +H\left(x_0,D\varphi(x_0)\right)\geq\alpha_0
 \]
 for every $\varphi\in\cont^2(\T^d)$ such that $u-\varphi$ has a local minimum at $x_0\in\T^d$. Such a function $\varphi$ will be called 
 {\em subtangent} to $v$ at $x_0$.\medskip
\end{enumerate}
We will say that $v$ is a solution if it is both a sub and a supersolution. 
\end{definition}

Solutions, subsolutions and supersolutions will be always assumed continuous 
in this paper and meant in the viscosity sense, hence the term 
{\em viscosity} will be omitted in the sequel.

\begin{oss}
One gets an equivalent definition of viscosity sub and super solution by replacing, in Definition \ref{def viscosity sol}, 
$\varphi\in\cont^2(\T^d)$ with $\varphi\in\cont^\infty(\T^d)$ and {\em local maximum} or {\em local minimum}) by {\em strict local maximum} or {\em strict local minimum}, see 
for instance \cite[Proposition 2.1]{barles_book}. 
\end{oss}

\begin{theorem}\label{teo eq viscous HJ}
Assume that $H$ satisfies {\bf (H1)},{\bf\,(H2)},{\bf\,(H3)}. %Then: % following hold
  \begin{enumerate}[(i)]
  \item  Any Lipschitz viscosity solution $u$ of \eqref{eq viscous HJ} is of class
    $C^2$ and solve the equation in the classical sense.
  \item Classical solutions of \eqref{eq viscous HJ} are unique up to
    additive constants. 
%  \item If there are positive constant $\ga, C$ such that
%    \[|H(x,p)-H(y,p)|\le C |x-y|^\ga \quad\hbox{ for all } x,y\in\T^d\ \hbox{and}\ 
%      p\in\R^d,\]
%      then
% \end{enumerate}
%    \begin{enumerate}[(a)]
    \item There is a unique real constant $\alpha_0$ for which the
    equation \eqref{eq viscous HJ} admits viscosity solutions.
  \item Any viscosity solution of \eqref{eq viscous HJ} is Lipschitz.
 \end{enumerate}
    \end{theorem}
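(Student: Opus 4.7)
The plan is to establish the four parts in the logical order (iv)$\Rightarrow$(i)$\Rightarrow$(ii)$\Rightarrow$(iii): Lipschitz regularity feeds the bootstrap to $C^2$, classical regularity enables a linearization plus strong maximum principle argument for uniqueness, and uniqueness pins down the ergodic constant.

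For (iv), the Lipschitz estimate of any viscosity solution $u$ is forced by the superlinearity (H2). I would use the doubling-of-variables strategy: consider
\[
\Phi(x,y) = u(x) - u(y) - \phi_C(|x-y|)
\]
on $\T^d\times\T^d$, with $\phi_C$ concave, smooth near the origin, and with slope of order $C$ there. At a maximum $(x_0,y_0)$ with $x_0\neq y_0$, Ishii's lemma produces second-order test matrices whose trace difference is controlled by $\phi_C''$; the H\"older bound (H3) absorbs the mismatch $H(x_0,\cdot)-H(y_0,\cdot)$, while the superlinearity (H2) forces a contradiction for $C$ large. Hence the maximum lies on the diagonal and $u$ is $C$-Lipschitz. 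A softer alternative, inspired by the stochastic representation \eqref{eq viscous representation}, uses the constant admissible control $v(s)\equiv(y-x)/t$ and an optimization in $t$ to offset the diffusion contribution.

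Part (i) is the main obstacle, and it is where H.\ Mitake's idea (credited in the acknowledgements) enters. Given that $u$ is $L$-Lipschitz, any $C^2$ supertangent $\varphi$ to $u$ at $x_0$ satisfies $|D\varphi(x_0)|\leq L$, so the subsolution inequality gives $-\Delta\varphi(x_0)\leq\alpha_0+\max_{|p|\leq L}|H(x_0,p)|=:M$, and symmetrically for subtangents. Thus $u$ is at once a viscosity sub- and supersolution of a Laplace equation with $L^\infty$ right-hand side; by the $L^p$-viscosity theory for linear elliptic operators one identifies $u$ as a distributional solution of $-\Delta u=\alpha_0-H(\cdot,Du)$ with right-hand side in $L^\infty(\T^d)$, and Calder\'on--Zygmund then gives $u\in W^{2,p}(\T^d)$ for every finite $p$. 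Sobolev embedding upgrades this to $u\in C^{1,\alpha}(\T^d)$ for every $\alpha<1$. Once $Du$ is continuous, the composition $x\mapsto H(x,Du(x))$ is H\"older continuous (using (H3) in $x$ and the local Lipschitz continuity in $p$ of the convex $H$), so Schauder estimates lift $u$ to $C^{2,\gamma}(\T^d)$, and the equation holds pointwise.

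For (ii), let $u,v$ be two classical solutions and $w:=u-v$. The convexity inequality $H(x,Du)-H(x,Dv)\geq D_pH(x,Dv)\cdot Dw$, combined with subtracting the equations, yields
\[
-\Delta w + b(x)\cdot Dw \leq 0,\qquad b(x):=D_pH(x,Dv(x))\in L^\infty(\T^d),
\]
so the Hopf strong maximum principle on the compact torus forces $w$ to be constant. For (iii), uniqueness of $\alpha_0$ uses the same linearization: if pairs $(\alpha_i,u_i)$ with $\alpha_1<\alpha_2$ both solved the equation, then $w=u_1-u_2$ would satisfy $-\Delta w + b(x)\cdot Dw\leq\alpha_1-\alpha_2<0$, which is incompatible with $Dw(x_{\max})=0$ and $\Delta w(x_{\max})\leq 0$. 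Existence of $\alpha_0$ follows from the classical vanishing-discount procedure: the discounted problem $\lambda u_\lambda-\Delta u_\lambda+H(x,Du_\lambda)=0$ admits a classical solution for each $\lambda>0$, with $\|\lambda u_\lambda\|_\infty$ bounded by $\|H(\cdot,0)\|_\infty$ via comparison with constants and $u_\lambda-u_\lambda(x_0)$ equi-Lipschitz by the argument of (iv); extracting a limit as $\lambda\to 0^+$ along a subsequence produces a viscosity solution of \eqref{eq viscous HJ} for some constant $\alpha_0$.
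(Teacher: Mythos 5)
Your proposal is correct and follows essentially the same route as the paper's own (sketched) proof: viscosity-to-distributional bounds on $\Delta u$ plus Calder\'on--Zygmund and Schauder for (i), linearization of $H$ in $p$ plus the strong maximum principle for (ii), the vanishing-discount (LPV) construction together with a maximum-principle argument for (iii), and the Ishii--Lions doubling estimate (which the paper simply cites as Theorem VII.1 of Ishii--Lions) for (iv). The only quibble is in (iv): the contradiction for large $C$ in the doubling argument comes from the uniform ellipticity, i.e.\ the $\phi_C''$ term produced by Ishii's lemma, with \textbf{(H3)} absorbing the $x$-dependence uniformly in $p$ --- not from the superlinearity \textbf{(H2)} --- and the parenthetical ``softer alternative'' via the representation \eqref{eq viscous representation} does not directly give the Lipschitz bound (the constant control steers only the mean, so one compares $u(x)$ with the heat average of $u$ at $y$ rather than with $u(y)$); the main argument you give is, however, exactly the estimate the paper imports.
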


It should be noted that, in Theorem \ref{teo eq viscous HJ} above, the assumption {\bf (H3)} is a rather strong requirement 
that is needed to conclude the uniqueness assertion (iii), but it is what we need in what follows.

    \begin{proof}[Sketch of the proof]\quad
      
      (i) Obviously, we have $-C\leq -\Delta u\leq C$ in the viscosity sense 
for some constant $C>0$, while from \cite{I} we have  $-C\leq -\Delta u\leq C$ in the viscosity sense 
if and only if $-C\leq -\Delta u\leq C$ in the distributional sense. Hence, 
$-\Delta u \in L^\infty(\T^d)$.  
Elliptic regularity theory ensures that $u\in W^{2,p}$ for any $p>1$
and, hence, $u\in \D{C}^{1,\sigma}$ for any $0<\sigma<1$.  Moreover, since
\[-\Delta u +H(x,Du)=\alpha\quad\hbox{ in } \T^d,\]
by the Schauder theory, we have $u\in \D{C}^{2,\sigma}$ for any $0<\sigma\leq \gamma$.

(ii) 
Let $u, v$ be classical solutions of \eqref{eq viscous HJ}. Pick $R\geqslant \|Du\|_\infty,\,  \|Dv\|_\infty$ and 
set $C:=\max_{\T^d\times B_{R+2}}|H(x,p)|$. By convexity, we have that $H(x,\cdot)$ is $C$--Lipschitz in $B_R$, for every $x\in\T^d$. 
Hence, by subtracting \eqref{eq viscous HJ} for $u$ and $v$, respectively,  we get
\[
0=-\Delta(u-v)+H(x,Du(x))-H(x,Dv(x))
\geq
-\Delta(u-v)-C|Du(x)-Dv(x)|\quad\hbox{in $\T^d$,}
\]
that is, $w:=u-v$ satisfies 
\[
-\Delta w -C|Dw|\leq 0 \ \ \text{ in }\T^d.
\] 
By the strong maximum principle, we infer that $w$ is a constant.

(iii) Observe first that the comparison result Theorem 3.3 in \cite{CIL} holds for the discounted equation
\begin{equation}
  \label{eq:discount}
  -\Delta u +H(x,Du)+\lam u =\alpha_0,\qquad\lam>0,
\end{equation}
and then use the argument in section II of \cite{LPV}.

(iv) The Lipschitz regularity is a consequence of Theorem VII.1 in \cite{IL}. 
Indeed, if we set 
\[
F(x,p,A)=-\tr A+H(x,p)-\alpha_0
\]
for every $(x,p)\in\R^{2d}$ and $d\times d$ real symmetric matrix $A$, then 
\[
|F(x,p,A)-F(y,p,A)|\leq Cd^{\gamma/2}
\]
due to the current assumption on $H$.  This ensures that $F$ satisfies (3.2) 
of \cite{IL}. The strict ellipticity (3.1) of \cite{IL} is valid with $F$, and thus 
\cite[Theorem VII.1]{IL} applies to \eqref{eq viscous HJ}. 

\end{proof}

\section{Discretization} \label{sez discretization}

Throughout this section we will assume $L:\T^d\times\R^d\to\R$ to satisfy condition {\bf (L1)},  {\bf (L2)}. We proceed to define a discrete operator  $\Leg_{\tau}:\D{C}(\T^d)\to\D{C}(\T^d)$, where the discretization parameter $\tau$ is taken in the interval $(0,1)$. 
Let us denote by $\parts(\T^d)$ the set of Borel probability measures on $\T^d$ endowed with the metrizable topology 
of weak*--convergence.  The source of randomness will be the heat kernel $\eta^\tau$  on $\T^d$, that is the continuous function
\begin{align*}
\eta^\tau:\T^d &\to \parts(\R^d)\\
y &\mapsto \eta^\tau_y,
\end{align*}
where $\eta^\tau_y$ is defined as follows:
\[
\eta^\tau_y(A):=\frac{1}{(4\pi\tau)^{\frac d2}}\int_{A} \e^{-\frac{|z-y|^2}{4\tau}}\,\dd z
 \qquad\hbox{for all $A\in\Bor(\R^d)$.} 
\]
Given $u\in\cont(\T^d)$, we have in particular 
\[
 \int_{\T^d} u(z)\,\dd\eta_y^\tau(z)=\frac{1}{(4\pi\tau)^{\frac d2}}\int_{\R^d} u(z)\,\e^{-\frac{|z-y|^2}{4\tau}}\,\dd z
 =
 (\eta^\tau*u)(y).
 %\qquad\hbox{for all $u\in\cont(\T^d)$.} 
\]
%Note that $\int_{\T^d} u(z)\,\dd\eta_y^\tau(z)=(\eta^\tau*u)(y)$, 
where 
$\eta^\tau(y):=(4\pi\tau)^{-\frac d2}\e^{-\frac{|y|^2}{4\tau}}$. \smallskip
%For notational convenience we will often write in the sequel ${\tilde u}^\tau(y):=\int_{\T^d}$

The discrete operator $\Leg_{\tau}:\D{C}(\T^d)\to\D{C}(\T^d)$ is defined as follows:  
\begin{equation}\label{def Leb tau}
\Leg_\tau  u(x) := \min_{q\in\R^d}\big( \tau L (x, -q) +(\eta^\tau*u)(x+\tau q)\big)\qquad\hbox{for every $x\in \R^d$.}  
\end{equation}
% Note that 
% \begin{equation}\label{def equivalent Leb tau}
% \Leg_\tau  u(x) = \min_{y\in\R^d}\left( \tau L \left(x, \frac{y-x}{\tau}\right) +(\eta^\tau*u)(y)\right)\qquad\hbox{for every $x\in \R^d$.}  
% \end{equation}

\begin{proposition}\label{prop Lax properties}
The operator $\Leg_\tau:\D{C}(\T^d)\to\D{C}(\T^d)$ is monotone and commute with additive constants, i.e. 
\begin{itemize}
\item[\em (i)] \quad $\cL_\tau u\le \cL_\tau v$ in $\R^d$\quad  if\quad  $u\leq v$ in $\R^d$;
\item[\em (ii)] \quad $\cL_\tau (u+k)=\cL_\tau u+k$ in $\T^d$ \quad for every $u\in\cont(\T^d)$ and $k\in\R$. 
\end{itemize}
In particular, 
$\|\cL_\tau u-\cL_\tau v\|_\infty\le\|u-v\|_\infty$.
\end{proposition}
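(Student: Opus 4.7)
The plan is to derive everything from two elementary properties of the convolution kernel $\eta^\tau$: it is pointwise nonnegative and its total mass on $\R^d$ equals $1$. These two facts transfer monotonicity and the constant-commutation property from the heat kernel to the convolution $\eta^\tau*u$, after which the infimum over $q\in\R^d$ preserves both properties. The nonexpansive estimate then follows from (i) and (ii) by the standard sandwich argument.

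For (i), suppose $u\le v$ on $\R^d$. Since $\eta^\tau\ge 0$, for every $y\in\R^d$ we have
\[
(\eta^\tau*u)(y)=\int_{\R^d}u(z)\,\eta^\tau(y-z)\,\dd z\le \int_{\R^d}v(z)\,\eta^\tau(y-z)\,\dd z=(\eta^\tau*v)(y).
\]
Adding $\tau L(x,-q)$ to both sides, with $y=x+\tau q$, and taking the minimum over $q\in\R^d$, one obtains $\cL_\tau u(x)\le \cL_\tau v(x)$ for every $x\in\R^d$. For (ii), since $\int_{\R^d}\eta^\tau(z)\,\dd z=1$, one has $(\eta^\tau*(u+k))(y)=(\eta^\tau*u)(y)+k$ for every $y\in\R^d$ and every $k\in\R$. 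Substituting this into the definition of $\cL_\tau$ and pulling the additive constant $k$ outside the minimum gives $\cL_\tau(u+k)=\cL_\tau u+k$.

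For the $L^\infty$--estimate, set $k:=\|u-v\|_\infty$, so that $v-k\le u\le v+k$ pointwise. Applying (i) and then (ii) yields
\[
\cL_\tau v-k=\cL_\tau(v-k)\le \cL_\tau u\le \cL_\tau(v+k)=\cL_\tau v+k,
\]
from which $\|\cL_\tau u-\cL_\tau v\|_\infty\le k=\|u-v\|_\infty$ at once.

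There is essentially no hard step here; the only point that deserves attention is that the minimum in the definition of $\cL_\tau u(x)$ is actually attained and depends continuously on $x$, so that $\cL_\tau u\in\cont(\T^d)$ and the pointwise inequalities make sense — but this is ensured by the superlinearity \textbf{(L2)} of $L$ (which makes the map $q\mapsto \tau L(x,-q)+(\eta^\tau*u)(x+\tau q)$ coercive) together with the continuity of $L$ and of $\eta^\tau*u$, and I would assume this well-posedness has already been recorded when $\cL_\tau$ was introduced.
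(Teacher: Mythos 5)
Your proof is correct and follows the same route as the paper: the paper treats (i) and (ii) as immediate from the definition (which you spell out via nonnegativity and unit mass of $\eta^\tau$) and then deduces the nonexpansiveness from the same sandwich $v-\|u-v\|_\infty\le u\le v+\|u-v\|_\infty$ combined with (i)--(ii).
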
 
\begin{proof}
The fact that $\Leg_\tau$ is monotone and commutes with additive constants is apparent by its definition.  Since $v-\|u-v\|\le u\le v+\|u-v\|$, from items (i)--(ii) 
we infer 
\[\Leg_\tau v-\|u-v\|_\infty\le \Leg_\tau u\le \Leg_\tau v+\|u-v\|_\infty\qquad\hbox{in $\T^d$.}
\]
\end{proof}

The following holds:

\begin{prop}\label{prop a priori compactness}
Let $\tau>0$. Then there exists a constant $\kappa_\tau$ such that $\Leg_\tau u$ is $\kappa_\tau$--Lipschitz for every $u\in\D{C}(\T^d)$.
\end{prop}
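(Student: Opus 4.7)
The plan is to combine an a priori bound on the minimizers of the variational problem defining $\Leg_\tau u(x)$, extracted from the superlinearity {\bf (L2)}, with the local regularity of $L$ on the resulting compact set. The argument splits into two steps, the first producing a uniform (in $x$) bound on the minimizers, the second converting this into a Lipschitz estimate via an inf-convolution competitor argument.

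For the first step, I would test $q=0$ in \eqref{def Leb tau} to obtain the crude upper bound
\[\Leg_\tau u(x) \leq \tau \max_{\T^d} L(\cdot,0) + \|u\|_\infty =: C_0.\]
If $q_x\in\R^d$ is any minimizer, the identity $\tau L(x,-q_x) + (\eta^\tau * u)(x+\tau q_x) = \Leg_\tau u(x)\leq C_0$ together with $\eta^\tau * u \geq -\|u\|_\infty$ gives $L(x,-q_x) \leq (C_0+\|u\|_\infty)/\tau$. Since the superlinearity {\bf (L2)} holds uniformly in $x$, there exists $R = R(\tau,u)>0$ such that $|q_x| \leq R$ for every $x\in\T^d$ and every minimizer $q_x$; the existence of such a minimizer itself follows from the continuity of $q\mapsto \tau L(x,-q)+(\eta^\tau*u)(x+\tau q)$ and its coercivity.

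For the second step, take $x,y\in\T^d$ and let $q_x$ be a minimizer for $\Leg_\tau u(x)$. I would test $\Leg_\tau u(y)$ with the admissible competitor $q':=q_x-(y-x)/\tau$, chosen so that $y+\tau q'=x+\tau q_x$ and the convolution terms cancel:
\[\Leg_\tau u(y)-\Leg_\tau u(x) \leq \tau\bigl[L(y,-q_x+(y-x)/\tau)-L(x,-q_x)\bigr].\]
For $|y-x|$ small, both arguments of $L$ on the right lie in the compact set $\T^d\times\overline B_{R+1}$. Splitting the difference into a $q$-difference (controlled by the local Lipschitz bound of $L(y,\cdot)$ on $\overline B_{R+1}$, itself a consequence of the convexity and continuity of $L$ on the slightly larger compact $\T^d\times\overline B_{R+2}$, uniformly in $y$) plus an $x$-difference (controlled by the uniform continuity of $L$ on the same compact) yields an estimate linear in $|y-x|$. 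The reverse inequality follows by exchanging the roles of $x$ and $y$ and using a minimizer $q_y$, and the Lipschitz constant $\kappa_\tau$ can then be read off.

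The main obstacle is Step~1: translating the pointwise bound on $\Leg_\tau u$ into a uniform-in-$x$ bound on the minimizers, since it is precisely this bound that reduces the analysis to a fixed compact $q$-range on which $L$ is well-controlled. Once Step~1 is secured, Step~2 amounts to a standard inf-convolution manipulation, the only delicate point being to confirm that the joint regularity of $L$ on the relevant compact set is strong enough to upgrade the modulus-of-continuity estimate into the Lipschitz one claimed in the statement.
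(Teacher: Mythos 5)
Your Step 2 is essentially the same competitor argument the paper uses (shift the control so that the convolution terms at $x$ and $y$ coincide), but Step 1 does not prove the statement as written. The bound you extract on the minimizers reads $L(x,-q_x)\leq \max_{\T^d}L(\cdot,0)+\mathrm{osc}(u)/\tau$ (or, with your cruder estimate, $(C_0+\|u\|_\infty)/\tau$), so the radius $R=R(\tau,u)$, and with it the Lipschitz constant you obtain in Step 2, depends on $u$. The proposition asserts the existence of a single $\kappa_\tau$ valid for \emph{every} $u\in\D{C}(\T^d)$, and this uniformity is exactly what is used later: the Schauder argument for Theorem \ref{teo existence alpha tau} needs $\Leg_\tau$ to send all of $\D{C}(\T^d)$ into the one fixed compact set $\Lip_{\kappa_\tau}(\T^d)$ (up to constants), and in the second proof the fixed points $v^\delta$ have $\|v^\delta\|_\infty$ of order $1/\delta$, so a constant depending on $\|u\|_\infty$ would not yield the required equi-Lipschitz bound. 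No refinement of your energy comparison alone can remove this dependence, since $\mathrm{osc}(u)$ is unbounded over $\D{C}(\T^d)$.

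The idea you are missing is to exploit the $\Z^d$-periodicity of $\eta^\tau*u$ rather than its size: $(\eta^\tau*u)(x+\tau q+\Bbbk)=(\eta^\tau*u)(x+\tau q)$ for every $\Bbbk\in\Z^d$, so replacing $q$ by $q-\Bbbk/\tau$ leaves the convolution term in \eqref{def Leb tau} unchanged. Choosing $A_\tau$ so large (possible by {\bf (L2)}, uniformly in $x$) that the sublevel set $Q_{A_\tau}(x)=\{q:\,L(x,-q)\leq A_\tau\}$ contains a cube of side $1/\tau$, i.e. $Q_{A_\tau}(x)+\frac1\tau\Z^d=\R^d$, any $q\notin Q_{A_\tau}(x)$ can be traded for some $q-\Bbbk/\tau\in Q_{A_\tau}(x)$ with the same convolution value and smaller Lagrangian cost. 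Hence the minimum is always attained in a compact set $[0,R_\tau]^d$ depending only on $\tau$ and $L$, uniformly in $u$, and plugging this into your Step 2 produces a $u$-independent $\kappa_\tau$. As for the "delicate point" you flag at the end: controlling the $x$-increment of $L$ by mere uniform continuity indeed only gives a modulus, not a Lipschitz bound; the paper resolves this by simply taking a Lipschitz constant of $L$ on $\T^d\times\left[-\frac1\tau,R_\tau+\frac1\tau\right]^d$, i.e. it implicitly works with $L$ locally Lipschitz, so this concern is real but is not specific to your argument.
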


\begin{proof}
For any fixed constant $A$, let us set  
\[
 Q_A(x):=\{q\in\R^d\,:\,L(x,-q)\leqslant A \}\qquad \hbox{for every $x\in\T^d$.}
\]
By the growth assumptions on $L$, there exist constants  $r(A), R(A)$ with  $\lim_{A\to +\infty} r(A) =  \lim_{A\to +\infty} R(A)= +\infty$ such that 
\[
 [0, r(A)]^d\subset Q_A(x) \subset [0, R(A)]^d.
\]
In particular, $Q_A(x)$ is a compact subset of $\R^d$ for every $x\in\T^d$. Choose $A_\tau$ large enough so that the set $Q_\tau(\cdot):=Q_{A_\tau}(\cdot)$ 
is such that 
\begin{equation}\label{eq Q_tau}
Q_\tau(x)+\frac{1}{\tau}\Z^d=\R^d\qquad\hbox{for every $x\in\T^d$}.
\end{equation}
Then, for every $q\not\in Q_\tau(x)$,  there exists $q_x\in Q_\tau(x)$ such that $q-q_x=\Bbbk/\tau$ for some $\Bbbk\in\Z^d$, i.e. $\tau q=\tau q_x+\Bbbk$. Then, given 
$u\in\D{C}(\T^d)$, by periodicity we get
\[
 (\eta^\tau*u)(x+\tau q)=(\eta^\tau*u)(x+\tau q_x+\Bbbk)=(\eta^\tau*u)(x+\tau q_x)
\]
while $L(x,-q)> A_\tau\geqslant L(x,q_x)$. Setting $R_\tau:=R_{A_\tau}$, we have in particular   
\[
 \Leg_\tau u (x) = \min_{q\in [0, R_\tau]^d}\big( \tau L(x,-q) +(\eta^\tau*u)(x+\tau q)\big)\qquad\hbox{for all $x\in\T^d$.}
\]
Let us denote by $K_\tau$ a Lipschitz constant of $L$ on $\T^d\times \left[-\frac 1\tau, R_\tau+\frac 1\tau \right]^d$. Let 
$u\in\cont(\T^d)$ and pick $x_1,x_2\in\T^d$. Let us denote by $q\in [0, R_\tau]^d$ a minimizing vector for $\Leg_\tau u(x_2)$ and set $\xi:=q+(x_2-x_1)/\tau$ so that 
$x_2+\tau q= x_1 +\tau \xi$. We have
\begin{eqnarray*}
\Leg_\tau u (x_1)-\Leg_\tau u(x_2) 
&\leqslant 
\tau L(x_1,-\xi) +(\eta^\tau*u)(x_1+\tau \xi)-\tau L(x_2,-q)-(\eta^\tau*u)(x_2+\tau q)\\
&= \tau \left(L(x_1,-\xi)-L(x_2,-q)\right) \leqslant K_\tau (1+{\tau})\,|x_1-x_2|.
\end{eqnarray*}
This gives the assertion with $\kappa_\tau:=K_\tau (1+{\tau})$. 
\end{proof}

We end this section with a result we will need in the sequel.

\begin{prop}\label{prop argmin L tau}
For $u\in\cont(\T^d)$, $\tau>0$ and $x\in\R^d$ we set 
\[
 \D{argmin}(\Leg_\tau u(x)):=\left\{q\in\R^d\,\mid\,\Leg_\tau u(x)=\tau L\left(x,-q\right)+(\eta^\tau * u)(x+\tau q)\right\}
\]
Then
\[
 -q\in\partial_p H\left( {x}, D(\eta^\tau*u)(x+\tau q)\right)
 \qquad\hbox{for all $q\in \D{argmin}(\Leg_\tau u(x))$.}
\]
\end{prop}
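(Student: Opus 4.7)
\medskip

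\noindent\textbf{Proof plan.} The function $\phi:=\eta^\tau*u$ is the convolution of a bounded function with a $C^\infty$ heat kernel, hence it is smooth on $\R^d$, so $D\phi(x+\tau q)$ is well defined in the classical sense. Fix $x\in\R^d$, set
\[
f(q):=\tau L(x,-q)+\phi(x+\tau q)\qquad(q\in\R^d),
\]
and let $q\in \D{argmin}(\Leg_\tau u(x))$, i.e.\ $q$ is a global minimizer of $f$. Since $L(x,\cdot)$ is convex and finite on $\R^d$ by {\bf (L1)}, its subdifferential $\partial_q L(x,-q)$ is a nonempty compact convex set whose support function coincides with the one-sided directional derivative $w\mapsto L'(x,-q;w)$.

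The first step is to write the first-order necessary condition for the minimum. For every $v\in\R^d$ and $t>0$, the inequality $f(q+tv)\ge f(q)$ yields, after dividing by $t$ and letting $t\downarrow 0$,
\[
\tau L'(x,-q;-v)+\tau\langle D\phi(x+\tau q),v\rangle\ge 0.
\]
Using the standard identity $L'(x,-q;-v)=\sup_{p\in\partial_q L(x,-q)}\langle p,-v\rangle$, this rewrites as
\[
\langle D\phi(x+\tau q),w\rangle\le\sup_{p\in\partial_q L(x,-q)}\langle p,w\rangle\qquad\text{for every }w\in\R^d
\]
(setting $w=-v$). Since $\partial_q L(x,-q)$ is closed and convex, the second separation theorem gives $D\phi(x+\tau q)\in\partial_q L(x,-q)$.

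The final step invokes Legendre--Fenchel duality. Since $H(x,\cdot)$ is the convex conjugate of $L(x,\cdot)$ by \eqref{def L}, and both are proper, convex and lower semicontinuous on $\R^d$, the inclusion $p\in\partial_q L(x,q_0)$ is equivalent to $q_0\in\partial_p H(x,p)$. Applying this with $p=D\phi(x+\tau q)$ and $q_0=-q$ yields
\[
-q\in\partial_p H\bigl(x,D(\eta^\tau*u)(x+\tau q)\bigr),
\]
which is the claim.

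The only mildly delicate point is the passage from the scalar inequality for directional derivatives to the inclusion $D\phi(x+\tau q)\in\partial_q L(x,-q)$; this is a standard consequence of the fact that a convex compact set in $\R^d$ equals the intersection of the half-spaces generated by its support function, but since $\phi$ is smooth and $L(x,\cdot)$ merely convex, one must carry the first-order analysis through the one-sided directional derivatives rather than through a naive gradient identity.
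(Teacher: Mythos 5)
Your argument is correct and follows essentially the same route as the paper's proof: the first-order optimality condition at the minimizer gives $D(\eta^\tau*u)(x+\tau q)\in\partial_q L(x,-q)$, and the conclusion then follows by Legendre--Fenchel duality between $L(x,\cdot)$ and $H(x,\cdot)$. The only difference is that you justify the inclusion in the subdifferential explicitly through one-sided directional derivatives and the support-function characterization, a detail the paper leaves implicit.
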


\begin{proof}
Let us fix $x\in\T^d$. Pick a $\hat q\in \D{argmin}(\Leg_\tau u(x))$. Then the function $q\mapsto \tau L(x,-q)+(\eta^\tau * u)(x+\tau q)$ has a minimum at $\hat q$. This 
implies
\[
 0\in -\partial_q L(x,-q)+D(\eta^\tau * u)(x+\tau q),
\]
or, otherwise stated, 
\[
D(\eta^\tau * u)(x+\tau q)\in \partial_q L(x,-q).
\]
The assertion follows by convex duality.
\end{proof}

%
%\subsection{Fixed points of the discrete Lax operator}
% 
We are  interested in finding solutions of the following identity
\begin{equation}\label{eq fixed points} 
\Leg_\tau  u = u -\tau\alpha\qquad\hbox{in $\T^d$,} 
\end{equation} 
where $\alpha\in\R$ and $u\in\cont(\T^d)$. We start with the following uniqueness result:

\begin{theorem}\label{teo unique alpha tau}
There exists at most one constant $\alpha\in\R$ for which equation \eqref{eq fixed points} admits solutions in $\cont(\T^d)$. 
Furthermore, the solution $u\in\cont(\T^d)$ of \eqref{eq fixed points} is unique, up to additive constants. 
\end{theorem}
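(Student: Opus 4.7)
The plan is to derive both assertions from two ingredients: the nonexpansive nature of $\Leg_\tau$ established in Proposition \ref{prop Lax properties}, and the strict positivity of the (periodized) heat kernel $\eta^\tau$. Uniqueness of $\alpha$ will follow by a standard translation trick comparing two candidate solutions at a max point of their difference; uniqueness of the solution up to additive constants will follow by a discrete strong-maximum-principle argument that substitutes an optimal vector for one solution as a test vector for the other, forcing the $L$-terms to cancel.

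For the first part, I would suppose that $(u_1,\alpha_1)$ and $(u_2,\alpha_2)$ both satisfy \eqref{eq fixed points}, and set $c:=\max_{\T^d}(u_1-u_2)$. The inequality $u_1\le u_2+c$ together with Proposition \ref{prop Lax properties} gives $\Leg_\tau u_1 \le \Leg_\tau u_2 + c$ on $\T^d$, which, upon substituting the fixed-point relations, reads $u_1-\tau\alpha_1\le u_2+c-\tau\alpha_2$. Evaluating at a maximum point $x_0$ of $u_1-u_2$ yields $\tau(\alpha_2-\alpha_1)\le 0$; swapping the two solutions gives the reverse inequality, so $\alpha_1=\alpha_2=:\alpha$.

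For uniqueness up to additive constants, set $v:=u_1-u_2$ and $M:=\max_{\T^d}v$, attained at some $x_0\in\T^d$. I would pick a minimizer $q_0\in\R^d$ in the definition of $\Leg_\tau u_2(x_0)$, whose existence is guaranteed by the coercivity reduction carried out in the proof of Proposition \ref{prop a priori compactness}. Using $q_0$ as a (suboptimal) test vector in \eqref{def Leb tau} for $\Leg_\tau u_1(x_0)$, the terms $\tau L(x_0,-q_0)$ cancel and one obtains
\begin{equation*}
M \;=\; v(x_0) \;=\; \Leg_\tau u_1(x_0)-\Leg_\tau u_2(x_0) \;\le\; (\eta^\tau*v)(x_0+\tau q_0),
\end{equation*}
where the first equality uses $\alpha_1=\alpha_2$. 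Since $v\le M$ on $\T^d$ and $\eta^\tau$ is a probability density, $(\eta^\tau*v)\le M$ pointwise, so the above inequality is in fact an equality.

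The one step requiring a bit of care, and what I regard as the genuine core of the argument, is to then conclude $v\equiv M$. I would rewrite the convolution as integration over $\T^d$ of $v$ against the $\Z^d$-periodization of $\eta^\tau$, which by the explicit Gaussian form of $\eta^\tau$ is smooth and bounded below by a strictly positive constant on $\T^d$. The equality $(\eta^\tau*v)(x_0+\tau q_0)=M$ combined with $v\le M$ and this strict positivity forces $v=M$ Lebesgue-a.e.\ on $\T^d$, and continuity of $v$ upgrades this to $v\equiv M$, giving that $u_1-u_2$ is constant, as required.
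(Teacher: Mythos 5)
Your proof is correct and follows essentially the same route as the paper: comparison at a maximum point of $u_1-u_2$ to identify $\alpha_1=\alpha_2$, then substitution of an optimal vector for $\Leg_\tau u_2(x_0)$ as a test vector for $\Leg_\tau u_1(x_0)$ and the full support (strict positivity) of the periodized heat kernel to force $u_1-u_2$ to be constant. Your use of the monotonicity and constant-commutation of $\Leg_\tau$ in the first step, and your explicit a.e.-plus-continuity argument in the last step, are only cosmetic refinements of the paper's argument.
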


\begin{proof}
Let $u_1 $ and $u_2$ fixed points with constants $\alpha_1$ and $\alpha_2$, respectively.  Let $x$  be a maximum of the difference $u_1- u_2$. Let $q\in\R^d$
 such that 
 $$ 
 u_2 (x) = \tau L\left(x, q \right) + \int_{\T^d}u_2(z)\, d\eta^\tau_{y} (z) +\tau\alpha_2. 
$$
By definition we have 
$$ 
u_1 (x) \leq  \tau L\left(x,q\right) + \int_{\T^d}u_1(z)\, d\eta^\tau_{y} (z) +\tau\alpha_1. 
$$
So 
$$
u_1 (x) - u_2 (x)  \leq \int_{\T^d}   \big( u_1 (z) - u_2 (z)\big)\, d\eta_{y}(z)  +\tau(\alpha_1 -\alpha_2). 
$$
Since $x$ is a maximum of $u_1-u_2$, we get $\tau(\alpha_1-\alpha_2)\geq 0$, hence  $\alpha_1\geq \alpha_2$.  By symmetry, we obtain the equality.

Let us now assume $u_1, u_2$ solutions to \eqref{eq fixed points} for the same $\alpha $.  By arguing as above 
we get
$$
u_1 (x) - u_2 (x)  \leq \int_{\T^d}   \big(u_1 (z) - u_2 (z)\big)\, d\eta^\tau_{y}(z)  \leq \max_{\T^d}\big(u_1-u_2)= u_1 (x) - u_2 (x), 
$$
hence $u_1 (z) - u_2 (z)   = u_1 (x) - u_2 (x)$ for every $z\in\D{spt}\left(\eta^\tau_{y}\right)=\T^d$.
\end{proof}

Let us proceed to show existence.

\begin{proof}[Proof of Theorem \ref{teo existence alpha tau}]
Let us denote by $\widehat{\D{C}}(\T^d)$ the quotient space of $\D{C}(\T^d)$, where we identify functions that differ 
by a constant, and by  $q:\D{C}(\T^d)\to\widehat{\D{C}}(\T^d)$ the projection. 
Since $\Leg_\tau$ commutes with the addition of constants, it defines an operator
$\hcL_\tau:\widehat{\D{C}}(\T^d)\to \widehat{\D{C}}(\T^d)$. 
Let us denote by $\Lip_{\kappa_\tau}(\T^d)$ the family of $\kappa_\tau$--Lipschitz function on $\T^d$, where $\kappa_\tau$ is 
the constant provided by Proposition \ref{prop a priori compactness}. 
The set $\widehat{\Lip}_{\kappa_\tau}(\T^d):=q\left(\Lip_{\kappa_\tau}(\T^d)\right)$ is a convex and compact subset of 
$\widehat{\D{C}}(\T^d)$, so we can apply Schauder fixed point Theorem (see for instance \cite[Theorem 3.2, p. 415]{Dug}) to infer that the 
operator $\hcL_\tau:\widehat{\D{C}}(\T^d)\to \widehat{\D{C}}(\T^d)$ has a fixed-point $\hat{u}_\tau\in \widehat{\Lip}_{\kappa_\tau}(\T^d)$, 
i.e. $\hcL_\tau(\hat{u}_\tau)=\hat{u}_\tau$. Lifting these relations to $\D{C}(\T^d)$, we infer that there exists a constant $\alpha_\tau\in\R$ 
such that $\Leg_\tau u_\tau= u_\tau-\tau\alpha_\tau$ \ \ in $\T^d$ with $u_\tau=q^{-1}(\hat{u}_\tau)\in \Lip_{\kappa_\tau}(\T^d)$. 
{The asserted uniqueness of $\alpha_\tau$ in $\R$ and $u_\tau$ in $\cont(\T^d)$   
is guaranteed by Theorem \ref{teo unique alpha tau}.} 
\end{proof}

In view that discretization is often associated with numerical computations, we give another proof of Theorem \ref{teo existence alpha tau}, 
which relies on Banach's fixed point theorem instead of  Schauder's fixed point theorem.  

\def\gd{\delta}

\begin{proof}[Second proof of Theorem \ref{teo existence alpha tau}] Let $\gd>0$, and consider the problem
$(1+\gd)u-\cL_\tau u=0$ in $\T^d$. By Proposition 2.1, $\cL_\tau : \D{C}(\T^d) \to \D{C}(\T^d)$ is $1$-Lipschitz. Hence, by Banach's fixed point theorem, $(1+\gd)^{-1}\cL_\tau$ has a unique fixed point $v^\gd\in \D{C}(\T^d)$, which is a unique solution of 
$(1+\gd)u-\cL_\tau u=0$ in $\T^d$. Let $\kappa_\tau>0$ be the constant given by Proposition 2.2, so that $v^\gd=(1+\gd)^{-1}\cL_\tau v^\gd$ is $(1+\gd)^{-1}\kappa_\tau$-Lipschitz on $\T^n$. Accordingly, the family $\{v^\gd \,|\, \gd>0\}$ is equi-Lipschitz on $\T^d$. By the Ascoli-Arzela theorem, we can select a sequence of $\gd_j>0$ converging to zero such that the functions $v^{\gd_j}-\min_{\T^d}v^{\gd_j}$ converge 
to a function $w$ in $\D{C}(\T^d)$ as $j\to\infty$.  Setting 
$m_j:=\min_{\T^d}v^{\gd_j}$and $w_j:=v^{\gd_j}-m_j$, we observe by Proposition 2.1 that
\[
0=(1+\gd_j) (w_j+m_j)-\cL_\tau(w_j+m_j)
= (1+\gd_j)w_j+\gd_j m_j-\cL_\tau w_j, 
\]
where the first and last terms in the last expression converge to 
$w$ and $\cL_\tau w$ in $\D{C}(\T^d)$, respectively. Consequently, 
the sequence of the constants $\gd_j m_j$ converges to a constant $-\tau\alpha_\tau$, which implies that $\cL_\tau w-w=\tau\alpha_\tau$ in $\T^d$. 
\end{proof} 

The standard proof of Banach's fixed point theorem is constructive or iterative, and therefore, the above proof can be easily implemented for numerical computations.

\begin{definition}
We say that $u\in\cont(\T^d)$ is an $\alpha$-subsolution for $\Leg_\tau$ if 
$$
\cL_\tau u\geq u-\tau\alpha\qquad\hbox{in $\T^d$.}
$$
Denote by $\cH_\tau(\af)$ the set of $\af$-subsolutions.
\end{definition}

By taking into account the properties of the Lax operator stated in Proposition \ref{prop Lax properties}, we easily infer the following facts:

\begin{prop}\label{prop properties L tau}
The sets $\cH_\tau(\alpha)$ are convex and closed subset of $\D C(\T^d)$ and increasing with respect to $\alpha$, i.e. $\cH_\tau(\af)\subseteq\cH_\tau(\be)$ 
if $\alpha\leq\beta$. Furthermore:
\begin{itemize}
\item[\em (i)] $u+k\in\cH_\tau(\alpha)$ for every $u\in\cH_\tau(\alpha)$ and $k\in\R$;\medskip
\item[\em (ii)] $\Leg_\tau\left(\cH_\tau(\alpha)\right)\subseteq \cH_\tau(\alpha)$. 
\end{itemize}
\end{prop}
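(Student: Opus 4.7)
The proposition collects five straightforward properties of the sublevel-type sets $\cH_\tau(\alpha)$, and the plan is to read each one directly off the definition together with Proposition \ref{prop Lax properties}. Set $F(u):=\cL_\tau u-u+\tau\alpha$, so that $\cH_\tau(\alpha)=\{u\in\cont(\T^d):F(u)\geq 0\text{ in }\T^d\}$.

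For \emph{convexity}, the key point is that $\cL_\tau$ is a concave operator on $\cont(\T^d)$: since $u\mapsto (\eta^\tau*u)(x+\tau q)$ is linear and the pointwise infimum of a family of affine functionals is concave, for any $u_1,u_2\in\cont(\T^d)$ and $\lambda\in[0,1]$ one has $\cL_\tau(\lambda u_1+(1-\lambda)u_2)\geq \lambda \cL_\tau u_1+(1-\lambda)\cL_\tau u_2$ in $\T^d$. Combined with the linearity of $u\mapsto u-\tau\alpha$, this yields $F(\lambda u_1+(1-\lambda)u_2)\geq \lambda F(u_1)+(1-\lambda)F(u_2)\geq 0$, whence convexity. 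For \emph{closedness}, if $u_n\to u$ uniformly with $u_n\in\cH_\tau(\alpha)$, then by the $1$-Lipschitz bound in Proposition \ref{prop Lax properties} we also have $\cL_\tau u_n\to \cL_\tau u$ uniformly, and passing to the limit in $\cL_\tau u_n\geq u_n-\tau\alpha$ gives $u\in\cH_\tau(\alpha)$. For \emph{monotonicity in} $\alpha$, if $\alpha\leq\beta$ and $u\in\cH_\tau(\alpha)$, then $\cL_\tau u\geq u-\tau\alpha\geq u-\tau\beta$.

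For (i), I will invoke Proposition \ref{prop Lax properties}(ii): if $u\in\cH_\tau(\alpha)$ and $k\in\R$, then
\[
\cL_\tau(u+k)=\cL_\tau u + k \geq (u-\tau\alpha)+k=(u+k)-\tau\alpha,
\]
so $u+k\in\cH_\tau(\alpha)$. For (ii), starting from $\cL_\tau u\geq u-\tau\alpha$ and applying the monotonicity of $\cL_\tau$ from Proposition \ref{prop Lax properties}(i) (together with the commutation with additive constants, just used) gives
\[
\cL_\tau(\cL_\tau u)\geq \cL_\tau(u-\tau\alpha)=\cL_\tau u-\tau\alpha,
\]
so $\cL_\tau u\in\cH_\tau(\alpha)$, as required.

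There is no real obstacle here; the only substantive ingredient is the concavity of $\cL_\tau$ needed for convexity, and everything else is a one-line manipulation of the inequality defining $\cH_\tau(\alpha)$ using the two properties of $\cL_\tau$ already established in Proposition \ref{prop Lax properties}.
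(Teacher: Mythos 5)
Your proof is correct and is exactly the argument the paper intends: the authors omit the proof, asserting only that the facts follow easily from Proposition \ref{prop Lax properties}, and your one-line manipulations of the inequality $\cL_\tau u\ge u-\tau\alpha$ using monotonicity, commutation with constants, and the $1$-Lipschitz bound are precisely those routine deductions. The only ingredient not explicitly listed in Proposition \ref{prop Lax properties} is the concavity of $\cL_\tau$ (needed for convexity of $\cH_\tau(\alpha)$), which you correctly justify by writing $\cL_\tau$ as a pointwise infimum of affine functionals of $u$.
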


Next, we show that all $\alpha_\tau$--subsolutions for $\Leg_\tau$ are actually solutions to \eqref{eq discrete HJ}. 

\begin{proposition}\label{prop sub=sol}
Let $(\alpha_\tau,u_\tau)\in\R\times\D{C}(\T^d)$ be a solution of 
\begin{equation*}
\Leg_\tau u_\tau=u_\tau-\tau \alpha_\tau\qquad\hbox{in $\T^d$}.
\end{equation*}
Then   $\cH_\tau(\alpha_\tau)=\{u_\tau+k\,:\,k\in\R\,\}$. Furthermore, 
\begin{equation}\label{def alpha tau}
\alpha_\tau=\min\{\af:\cH_\tau(\af)\ne\emptyset\}.
\end{equation}
\end{proposition}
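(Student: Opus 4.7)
The plan is to split the statement into two parts: first the set equality $\cH_\tau(\alpha_\tau)=\{u_\tau+k:k\in\R\}$, then the minimality \eqref{def alpha tau}.

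The inclusion $\{u_\tau+k:k\in\R\}\subseteq\cH_\tau(\alpha_\tau)$ is immediate: since $\cL_\tau u_\tau=u_\tau-\tau\alpha_\tau$, the function $u_\tau$ is in particular an $\alpha_\tau$-subsolution, and Proposition \ref{prop properties L tau}(i) lifts this to all translates $u_\tau+k$. The content lies in the reverse inclusion, which is essentially a strong-maximum-principle assertion for the nonlocal operator $\cL_\tau$.

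Given $v\in\cH_\tau(\alpha_\tau)$, I would normalize by setting $M:=\max_{\T^d}(v-u_\tau)$, achieved at some $x_0\in\T^d$, and work with $w:=v-M$. Then $w\leq u_\tau$ on $\T^d$ with equality at $x_0$, and Proposition \ref{prop properties L tau}(i) shows that $w$ is still an $\alpha_\tau$-subsolution. The heart of the argument is to pick any $q\in\D{argmin}(\cL_\tau u_\tau(x_0))$ and test the definition of $\cL_\tau w(x_0)$ against this same $q$. Combining $w\leq u_\tau$, the subsolution inequality for $w$ at $x_0$, and the fixed-point identity for $u_\tau$ at $x_0$, the chain
\[
\cL_\tau u_\tau(x_0)\leq\cL_\tau w(x_0)\leq\tau L(x_0,-q)+(\eta^\tau*w)(x_0+\tau q)\leq\cL_\tau u_\tau(x_0)
\]
collapses to equalities; in particular,
\[
\int_{\R^d}\bigl(u_\tau(z)-w(z)\bigr)\,\dd\eta^\tau_{x_0+\tau q}(z)=0.
\]
Since $u_\tau-w\geq 0$ is continuous and the density of $\eta^\tau_{x_0+\tau q}$ (equivalently, its $\Z^d$-periodization on $\T^d$) is strictly positive, this forces $w\equiv u_\tau$, that is, $v=u_\tau+M$.

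For the minimality \eqref{def alpha tau}, $u_\tau\in\cH_\tau(\alpha_\tau)$ already shows $\alpha_\tau\in\{\alpha:\cH_\tau(\alpha)\neq\emptyset\}$. If $v\in\cH_\tau(\alpha)$ for some $\alpha<\alpha_\tau$, the monotonicity of the family in Proposition \ref{prop properties L tau} places $v$ in $\cH_\tau(\alpha_\tau)$, whence $v=u_\tau+k$ by what was just proved; then $\cL_\tau v=u_\tau-\tau\alpha_\tau+k=v-\tau\alpha_\tau$, and the subsolution inequality $\cL_\tau v\geq v-\tau\alpha$ gives $\alpha_\tau\leq\alpha$, a contradiction. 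The main obstacle is precisely the strong-maximum-principle step: one needs the strict positivity of the heat kernel on $\T^d$ to promote an equality of convolutions at a single point into the pointwise equality $w\equiv u_\tau$. Everything else is formal bookkeeping with Propositions \ref{prop Lax properties} and \ref{prop properties L tau}.
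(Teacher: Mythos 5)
Your proof is correct and takes essentially the same route as the paper, which argues as in Theorem \ref{teo unique alpha tau}: test the subsolution against a minimizing control for $u_\tau$ at a maximum point of the difference, observe that the chain of inequalities collapses to equalities, and use the strict positivity (full support) of the heat kernel to conclude that the difference is constant. Your derivation of \eqref{def alpha tau} from the set equality together with the monotonicity of $\alpha\mapsto\cH_\tau(\alpha)$ is just a repackaging of the paper's direct max-point inequality $\tau(\alpha-\alpha_\tau)\geq 0$.
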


\begin{proof}
Let us pick $u\in\cH_\tau(\alpha)$ and argue as in the proof of Theorem \ref{teo unique alpha tau} with $u_1:=u$ and $u_2:=u_\tau$. By also using the fact that 
$u\leq \Leg_\tau u +\tau\alpha$, we end up with 
\[
u(x)-u_\tau(x)\le \int_{\T^d} (u(z)-u_\tau(z))d\eta_y(z)+\tau(\alpha-\alpha_\tau)\qquad\hbox{for all $x\in\T^d$.}
\]
By picking as $x$ a maximum point of $u-u_\tau$, we conclude that $\alpha\geq\alpha_\tau$. When $\alpha=\alpha_\tau$, we furthermore get that $u-u_\tau$ is constant.   
\end{proof}

We conclude this section by deriving the following bounds on the constant $\alpha_\tau$. 

\begin{prop}\label{prop alpha_tau}
The following holds: 
\begin{equation*}\label{eq alpha tau}
-\max_{y\in\T^d}\min_{q\in\R^d} L(y,q) \leq \alpha_\tau \leq -\min_{\T^d\times\R^d} L.
\end{equation*}
\end{prop}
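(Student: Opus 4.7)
My plan is to establish the two bounds separately, using the variational characterization of $\alpha_\tau$ from Proposition \ref{prop sub=sol} for the upper bound and a maximum-point argument applied to $u_\tau$ for the lower bound.

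For the upper bound $\alpha_\tau\leq -\min_{\T^d\times\R^d}L$, I would test the subsolution inequality against an arbitrary constant function $u\equiv c$. Since the convolution of a constant with $\eta^\tau$ is the same constant, we have $\Leg_\tau u(x)=c+\tau\min_{q\in\R^d}L(x,-q)$, so the inequality $\Leg_\tau u\geq u-\tau\af$ becomes $\min_{q\in\R^d}L(x,-q)\geq -\af$ for every $x\in\T^d$. Taking $\af=-\min_{\T^d\times\R^d}L$ makes this trivially true (after a change of variables $q\mapsto -q$ in the inner minimum), so $c\in\cH_\tau(-\min L)$. By the characterization \eqref{def alpha tau}, this yields $\alpha_\tau\leq -\min_{\T^d\times\R^d} L$.

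For the lower bound $\alpha_\tau\geq -\max_{y\in\T^d}\min_{q\in\R^d} L(y,q)$, I would pick a maximum point $x_0\in\T^d$ of $u_\tau$ and evaluate the fixed-point identity $\Leg_\tau u_\tau(x_0)=u_\tau(x_0)-\tau\alpha_\tau$ there. The key observation is that for any $y\in\R^d$,
\[
(\eta^\tau*u_\tau)(y)=\int_{\R^d} u_\tau(z)\,\dd\eta^\tau_y(z)\leq u_\tau(x_0),
\]
since $\eta^\tau_y$ is a probability measure and $x_0$ is a maximum of $u_\tau$. Therefore
\[
u_\tau(x_0)-\tau\alpha_\tau=\min_{q\in\R^d}\bigl(\tau L(x_0,-q)+(\eta^\tau*u_\tau)(x_0+\tau q)\bigr)\leq \tau\min_{q\in\R^d}L(x_0,-q)+u_\tau(x_0),
\]
whence $-\alpha_\tau\leq \min_{q\in\R^d} L(x_0,-q)\leq\max_{y\in\T^d}\min_{q\in\R^d} L(y,q)$, again after a change of sign in $q$.

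Neither step presents a genuine obstacle: the only points requiring care are (i) recognizing that the projection $q\mapsto -q$ makes $\min_q L(x,-q)$ and $\min_q L(x,q)$ coincide, so that the bounds come out in the form stated, and (ii) noting that the convolution with $\eta^\tau$ preserves the pointwise upper bound by $\|u_\tau\|_\infty$, which is the essential ingredient in the maximum-point argument.
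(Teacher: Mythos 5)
Your proof is correct. The lower bound is obtained exactly as in the paper: you evaluate the fixed-point identity at a maximum point of $u_\tau$ and use that averaging against the probability measure $\eta^\tau_y$ cannot exceed $\max_{\T^d}u_\tau$, which is precisely the paper's comparison $\cL_\tau u_\tau\le\cL_\tau(\max_{\T^d}u_\tau)$ via monotonicity. For the upper bound, however, you take a different route: instead of the paper's symmetric argument (evaluate the equation at a \emph{minimum} point of $u_\tau$, i.e. compare $\cL_\tau u_\tau\ge\cL_\tau(\min_{\T^d}u_\tau)$ and use that the convolution term is $\ge\min_{\T^d}u_\tau$), you observe that every constant function lies in $\cH_\tau\bigl(-\min_{\T^d\times\R^d}L\bigr)$ and invoke the characterization \eqref{def alpha tau} from Proposition \ref{prop sub=sol}. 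Both are valid; the paper's version is slightly more self-contained, since it needs only the monotonicity of $\cL_\tau$ (Proposition \ref{prop Lax properties}) and the fixed-point equation, whereas yours leans on Proposition \ref{prop sub=sol}, whose proof is itself a maximum-point argument of the same kind. On the other hand, your upper-bound argument has the small conceptual advantage of exhibiting explicit competitors (the constants) for the variational formula \eqref{def alpha tau}, which is the natural discrete analogue of bounding the critical value by testing constant subsolutions in the continuous setting.
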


\begin{proof}
Let us set $L_m(x):=\min_{q\in\R^{d}} L(x,q)$. 
Pick $u\in \cH_\tau(\af_\tau)$ and 
set $v(x):=\max_{\T^d} u$,\ $w(x):=\min_{\T^d} u$ for all $x\in\R^d$. Then, for all $x\in\T^d$,   
\[
u(x)-\tau\alpha_\tau 
=
\cL_\tau u(x)
\leq 
\cL_\tau v(x)
=  \tau L_m(x) +\max_{\T^d} u
\leq \tau \max_{\T^d} L_m+\max_{\T^d} u \]
and 
\[
u(x)-\tau\alpha_\tau 
=
\cL_\tau u(x)
\geq 
\cL_\tau w(x)
=
\tau L_m(x) +\min_{\T^d} u
\geq 
\tau \min_{\T^d} L_m+\min_{\T^d} u. 
\]
That implies $-\max_{\T^d} L_m \le \alpha_\tau \leq -\min_{\T^d} L_m$, as it was asserted.
\end{proof}
\ 

\section{Equi-continuity of the functions $u_\tau$}\label{sez equicontinuity}

This section is devoted to prove equi-continuity of the functions 
$\{u_\tau\,:\, \tau\in (0,1)\,\}$, where $u_\tau$ denotes a solution in 
$C(\T^d)$ of the equation 
\begin{equation} \label{eq u tau}
\cL_\tau u_\tau=u_\tau-\tau\alpha_\tau \ \ \text{ in } \T^d
\end{equation}
and $\alpha_\tau$ is the constant given by Theorem \ref{teo existence alpha tau}. 
Throughout the rest of the paper, we will assume that $L$ satisfies the following further condition, for some constants $\gamma\in(0,\,1)$ and $M_\gamma >0$:
\begin{equation}\label{L3}
L(x+h,q+k)+L(x-h,q-k)-2L(x,q)\leq M_\gamma (|h|^\gamma+|k|) \tag{\bf L3}
\end{equation}
for all $(x,q)\in\T^d\times\R^d$ and $h,k\in B_{R_d}$ with $R_d:=\sqrt{d}/2$. 
\def\cF{\mathcal{F}}
\def\cE{\mathcal{E}}
\def\gT{\Theta} 
\def\:{\,\operatorname{:}\,}

We start by noticing that  
\[
%\begin{aligned}
\cL_\tau u(x)
%=\min_{q\in\R^d}(\tau L(x,-q)+(\eta^\tau*u)(x+\tau q))
=\min_{q\in\R^d}\left(\tau L\Big(x,\frac q\tau\Big)+(\eta^\tau*u)(x-q)\right)
=\min_{y\in\R^d}\left(\tau L\Big(x,\frac{x-y}\tau\Big)+(\eta^\tau*u)(y)\right).
%\end{aligned}
\]
We introduce the operators $\cE_\tau, \cF_\tau \: \D{C}(\T^d)\to \D{C}(T^d)$ defined as 
\[
\cE_\tau u=\eta^\tau* u,\qquad
%\hbox{and}\qquad
%\]
%and
%\[
\cF_\tau u(x)=\min_{y\in\R^d}\left(\tau L\Big(x,\frac{x-y}\tau\Big)+u(y)\right). 
\]
Notice that \ \ $\cL_\tau=\cF_\tau \circ \cE_\tau.$
For $u\in \D{C}(\T^d)$ and $\sigma\in(0,\,2]$, we set 
\[
\Theta_\sigma(u):=\inf\{a\geq 0 \mid u(x+h)+u(x-h)-2u(x)\leq a|h|^\sigma \ \text{ for all } x,h\in\R^d\}, 
\]
and 
\[
\Lambda_\sigma(\T^d)=\{v\in \D{C}(\T^d)\mid \Theta_\sigma(v)<\infty\}.
\]
We also introduced the following temporary notation, defined for $R>0$:
\[
\Theta_{\sigma,R}(u):=\inf\{a\geq 0\mid u(x+h)+u(x-h)-2u(x)
\leq a|h|^\sigma \ \ \text{ for } x, h\in\R^d, \ \text{ with }|h|\leq R\}.
\]
We start with some preliminary results.

\begin{lemma} \label{with R} 
Let $u\in \Lambda_\sigma(\T^d)$, with $\sigma\in(0,\,2]$, and $R\geq R_d:=\sqrt d/2$. Then 
\[
\Theta_{\sigma,R}(u)=\Theta_\sigma(u). 
\]
\end{lemma}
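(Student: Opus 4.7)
The inequality $\Theta_{\sigma,R}(u)\le \Theta_\sigma(u)$ is immediate from the definitions, since the condition defining $\Theta_{\sigma,R}(u)$ is a restriction of the one defining $\Theta_\sigma(u)$. All the content is in the reverse inequality, and the plan is to exploit $\Z^d$-periodicity to reduce an arbitrary $h\in\R^d$ to one lying in $\overline B_{R_d}\subset\overline B_R$.

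The key observation is that the second-order difference
\[
\Delta_h u(x):=u(x+h)+u(x-h)-2u(x)
\]
is invariant under translations of $h$ by vectors of $\Z^d$. Indeed, if $k\in\Z^d$ then both $k$ and $-k$ lie in $\Z^d$, so by periodicity $u(x+h+k)=u(x+h)$ and $u(x-h-k)=u(x-h)$; thus $\Delta_{h+k}u(x)=\Delta_h u(x)$. Given an arbitrary $h\in\R^d$, one can pick $k\in\Z^d$ so that $h':=h-k\in[-\tfrac12,\tfrac12]^d$, and then $|h'|\le\sqrt d/2=R_d\le R$.

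I would now fix any $a\ge \Theta_{\sigma,R}(u)$ and show $\Delta_h u(x)\le a|h|^\sigma$ for all $x,h\in\R^d$. If $|h|\le R$ this is exactly the hypothesis. If $|h|>R\ge R_d$, choose $k\in\Z^d$ and set $h':=h-k$ as above, so $|h'|\le R_d\le R<|h|$. The invariance above together with the defining inequality for $\Theta_{\sigma,R}(u)$ applied at $h'$ give
\[
\Delta_h u(x)=\Delta_{h'} u(x)\le a|h'|^\sigma\le a\,R_d^\sigma\le a|h|^\sigma.
\]
Taking the infimum over admissible $a$ yields $\Theta_\sigma(u)\le\Theta_{\sigma,R}(u)$, which combined with the trivial direction completes the proof.

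There is no real obstacle: the only conceptual point is that the second difference is a $\Z^d$-invariant functional of $h$ (each of the two shifted arguments independently absorbs the lattice translation by periodicity), together with the elementary fact that the fundamental domain $[-\tfrac12,\tfrac12]^d$ has diameter $2R_d=\sqrt d$ from the centre, so that $R_d$ is exactly the threshold at which one can reduce any $h$ modulo the lattice into $\overline B_{R_d}$.
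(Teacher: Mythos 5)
Your proof is correct and follows essentially the same route as the paper: both rest on the $\Z^d$-periodicity of $h\mapsto u(x+h)+u(x-h)-2u(x)$ and the fact that the fundamental cube $[-\tfrac12,\tfrac12]^d$ lies in $\overline B_{R_d}$. The only cosmetic difference is that you reduce each $h$ with $|h|>R$ modulo the lattice pointwise, whereas the paper bounds the second difference for such $h$ by its maximum over the fundamental cube; the estimate is the same.
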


\begin{proof}  It is obvious that
\begin{equation} \label{with R+1}
\Theta_{\sigma,R}(u){ \leq} \Theta_\sigma(u). 
\end{equation}
To prove the reversed inequality, we fix $x\in\R^d$ and $a\geq 0$ and assume that
\begin{equation}\label{with R+2}
u(x+h)+u(x-h)-2u(x)\leq a|h|^\sigma \ \ 
\text{ for } x, h\in\R^d, \ \text{ with }|h|\leq R. 
\end{equation}
Set 
\[
f(h):=u(x+h)+u(x-h)-2u(x) \ \ \text{ for all }h\in\R^d,
\]
and observe that $f\in \D{C}(\T^d)$. %Indeed, for any $z\in\Z^d$, we have
%\[f(h+z)=u(x+h+z)+u(x-h-z)+u-2u(x)=u(x+h)+u(x-h)-2u(x)=f(h).\]
By the periodicity of $f$, we see that
\[
M :=\max_{\R^d} f=\max\{f(h)\mid h\in\R^d,\, |h_i|\leq 1/2 \ \ \text{ for } i=1,\ldots,d\}. 
\]
Since $f(0)=0$, we have $M \geq 0$. 

By \eqref{with R+2}, we have 
\[\begin{aligned}
M &\,%=\max\{f(h)\mid h\in\R^d,\, |h_i|\leq 1/2 \ \forall i=1,\ldots,d\}
%\\&\,
\leq a \max\{|h|^\sigma\mid h\in\R^d,\, |h_i|\leq 1/2 \ \text{ for } i=1,\ldots,d\}
\\&\,\leq a \max\{|h|^\sigma\mid h\in\R^d,\, |h|\leq R\}=aR^\sigma,
\end{aligned}
\]
which implies that 
\[
f(h)\leq M \leq aR^\sigma\leq a|h|^\sigma \ \ \ \text{ for }\ h\in \R^d\setminus B_R. 
\]
This together with \eqref{with R+2} yields 
\[
f(h)\leq a|h|^\sigma \ \ \ \text{ for all } h\in \R^d.  
\]
Thus, we have the reversed inequality of \eqref{with R+1} and conclude that 
\ $
\Theta_\sigma(u)=\Theta_{\sigma,R}(u). 
$
\end{proof} 

We derive the following consequence. 

\begin{corollary} \label{inclusion} Let $0<\rho<\sigma\leq 2$. Then 
\[
\Lambda_\sigma(\T^d)\subset \Lambda_\rho(\T^d).
\]
\end{corollary}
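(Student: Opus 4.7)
The plan is the following straightforward reduction. Given $u\in\Lambda_\sigma(\T^d)$, set $b:=\Theta_\sigma(u)$, which is finite by assumption. I want to produce a constant $a\geq 0$ such that
\[
u(x+h)+u(x-h)-2u(x)\leq a|h|^\rho \qquad\text{for all } x,h\in\R^d.
\]

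First I would observe that it suffices to work with $h$ in the ball $\overline B_{R_d}$. Indeed, for such $h$ the elementary inequality $|h|^\sigma = |h|^{\sigma-\rho}|h|^\rho \leq R_d^{\sigma-\rho}|h|^\rho$ holds (since $\sigma>\rho$ and $|h|\leq R_d$), so the defining bound of $\Theta_\sigma(u)$ gives
\[
u(x+h)+u(x-h)-2u(x)\leq b|h|^\sigma \leq bR_d^{\sigma-\rho}|h|^\rho \qquad \text{for } x\in\R^d,\ |h|\leq R_d.
\]
In the temporary notation, this says $\Theta_{\rho,R_d}(u)\leq bR_d^{\sigma-\rho}<\infty$.

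The second step is to upgrade this local modulus to a global one by invoking Lemma \ref{with R} with exponent $\rho$ in place of $\sigma$ and $R=R_d$. That lemma is proved by exploiting the $\Z^d$-periodicity of the map $h\mapsto u(x+h)+u(x-h)-2u(x)$ to reduce any $h$ to the fundamental cube $[-1/2,1/2]^d\subset\overline B_{R_d}$, so the argument is purely about the exponent used to compare $|h|^{\text{exp}}$ with the maximum of the finite-difference function, and it applies verbatim to $\rho$. Applying it yields $\Theta_\rho(u)=\Theta_{\rho,R_d}(u)\leq bR_d^{\sigma-\rho}<\infty$, i.e.\ $u\in\Lambda_\rho(\T^d)$.

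There is essentially no obstacle here: the only subtle point is that Lemma \ref{with R} is stated under the a priori hypothesis $u\in\Lambda_\sigma(\T^d)$, but inspecting its proof one sees that it only needs the inequality $u(x+h)+u(x-h)-2u(x)\leq a|h|^\sigma$ for $|h|\leq R$ with some finite $a$, which is precisely what Step 1 produces for the exponent $\rho$. So the argument is a two-line application of the lemma combined with the trivial interpolation $|h|^\sigma\leq R_d^{\sigma-\rho}|h|^\rho$ on $\overline B_{R_d}$.
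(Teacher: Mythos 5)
Your proof is correct and takes essentially the same route as the paper: interpolate $|h|^\sigma \leq R_d^{\sigma-\rho}|h|^\rho$ for $|h|\leq R_d$ to get $\Theta_{\rho,R_d}(u)<\infty$, then invoke Lemma \ref{with R} to pass to the global quantity $\Theta_\rho(u)$. Your observation that the lemma's proof only uses the local inequality (not a priori membership in $\Lambda_\rho(\T^d)$) is accurate and is exactly how the paper applies it as well.
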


\begin{proof} Let $u\in\Lambda_\sigma(\T^d)$ and $x,h\in\R^d$. If $|h|\leq R_d$,  
then we have 
\[
u(x+h)+u(x-h)-2u(x)\leq \Theta_\sigma(u)|h|^\sigma\leq \Theta_\sigma(u)R_d^{\sigma-\rho}|h|^\rho,
\]
which shows that $\Theta_{\rho,R_d}(u)<\infty$.  By Lemma \ref{with R}, we see that 
$\Theta_\rho(u)<\infty$ and, moreover, that $\Lambda_\sigma(\T^d)\subset \Lambda_\rho(\T^d)$. 
\end{proof}

For {$\sigma\in (0,\,1]$} and $u\in \D{C}(\R^d)$, we write 
\[
\Lip_\sigma(u)=\sup_{x,y\in\R^d, \ x\not=y}\frac{|u(x)-u(y)|}{|x-y|^\sigma},
\]
and 
\[
\D{C}^{0,\sigma}(\T^d):=\Big\{u\in \D{C}(\T^d)\mid \Lip_\sigma(u)<\infty\Big\}. 
\]

The following lemma is similar to Corollary \ref{inclusion}. 

\begin{lemma} \label{inclusion2} Let $0<\rho<\sigma\leq 1$. Then 
\[
 \D{C}^{0,\sigma}(\T^d)\subset\D{C}^{0,\rho}(\T^d). 
\]
\end{lemma}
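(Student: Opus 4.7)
The plan is to imitate the proof of Corollary \ref{inclusion}: control small-distance behavior by the hypothesis and use periodicity to dispose of large distances.

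First, I fix $u\in \D{C}^{0,\sigma}(\T^d)$ and two points $x,y\in\R^d$. The case I expect to be immediate is $|x-y|\leq R_d$: here, since $0<\rho<\sigma\leq 1$ and $|x-y|\leq R_d$, we have $|x-y|^\sigma=|x-y|^{\sigma-\rho}|x-y|^\rho\leq R_d^{\sigma-\rho}|x-y|^\rho$, so
\[
|u(x)-u(y)|\leq \Lip_\sigma(u)\,|x-y|^\sigma\leq \Lip_\sigma(u)\,R_d^{\sigma-\rho}|x-y|^\rho.
\]

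The second case is $|x-y|>R_d$. Since $u$ is continuous and $\Z^d$--periodic, it is bounded on $\R^d$ with $\|u\|_\infty:=\max_{\T^d}|u|<\infty$, and therefore
\[
|u(x)-u(y)|\leq 2\|u\|_\infty\leq \frac{2\|u\|_\infty}{R_d^\rho}|x-y|^\rho.
\]

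Combining the two cases yields
\[
\Lip_\rho(u)\leq \max\left(\Lip_\sigma(u)\,R_d^{\sigma-\rho},\,\frac{2\|u\|_\infty}{R_d^\rho}\right)<\infty,
\]
which gives $u\in \D{C}^{0,\rho}(\T^d)$ and proves the inclusion. I do not anticipate any serious obstacle: the only subtlety is that Hölder inclusions of this form require boundedness of the domain (or of the function), which here is supplied automatically by $\Z^d$--periodicity, exactly as in Lemma \ref{with R} and Corollary \ref{inclusion}.
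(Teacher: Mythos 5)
Your argument is correct: the near case $|x-y|\leq R_d$ gives $|u(x)-u(y)|\leq \Lip_\sigma(u)R_d^{\sigma-\rho}|x-y|^\rho$, and in the far case $|x-y|>R_d$ the bound $2\|u\|_\infty\leq 2\|u\|_\infty R_d^{-\rho}|x-y|^\rho$ is legitimate because periodicity and continuity make $u$ bounded, so $\Lip_\rho(u)<\infty$. The paper takes a slightly different, sharper route: instead of splitting into cases, it replaces $y$ by $y+z$ with $z\in\Z^d$ chosen so that $|x-y-z|=\min_{\zeta\in\Z^d}|x-y-\zeta|\leq R_d$, and then applies the $\sigma$-H\"older bound to the shifted pair, obtaining in one stroke
\[
|u(x)-u(y)|=|u(x)-u(y+z)|\leq \Lip_\sigma(u)|x-y-z|^\sigma\leq \Lip_\sigma(u)R_d^{\sigma-\rho}|x-y|^\rho .
\]
The difference is mostly quantitative: the paper's constant is $\Lip_\sigma(u)R_d^{\sigma-\rho}$, independent of $\|u\|_\infty$, whereas yours is $\max\bigl(\Lip_\sigma(u)R_d^{\sigma-\rho},\,2\|u\|_\infty R_d^{-\rho}\bigr)$; your version is the standard ``H\"older spaces nest on bounded domains'' argument (closer in spirit to the proof of Lemma \ref{with R}), while the paper's lattice-shift trick mirrors the periodic reduction used throughout Section \ref{sez equicontinuity} and yields the cleaner estimate. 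Either proof establishes the lemma.
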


\begin{proof} Let $u\in \D{C}^{0,\sigma}(\T^d)$. For any $x,y\in\R^d$,
we choose $z\in\Z^d$ so that 
\[
|x-y-z|=\min_{\zeta\in\Z^d}|x-y-\zeta|,
\]
and note that 
\[
z\in\prod_{i=1}^d [x_i-y_i-1/2,\, x_i-y_i+1/2],
\]
and 
\[\begin{aligned}
|u(x)-u(y)|&\,=|u(x)-u(y+z)|\leq \Lip_\sigma(u)|x-y-z|^\sigma
\\&\,\leq  \Lip_\sigma(u)R_d^{\sigma-\rho}|x-y-z|^\rho
\leq  \Lip_\sigma(u)R_d^{\sigma-\rho}|x-y|^\rho.
\end{aligned}
\]
This shows that \ $ \D{C}^{0,\sigma}(\T^d)\subset\D{C}^{0,\rho}(\T^d)$.  
 \end{proof} 

\begin{proposition} \label{prop-equiv} For any $\sigma\in(0,\,1)$, we have
\[
\Lambda_\sigma(\T^d)= \D{C}^{0,\sigma}(\T^d)
\]
and for some constant $C_\sigma>1$, depending only on $\sigma$, 
\[
C_\sigma^{-1}\Theta_\sigma(u)\leq \Lip_\sigma(u)\leq C_\sigma \Theta_\sigma(u) \ \ \text{ for } u\in \Lambda_\sigma(\T^d). 
\]
\end{proposition}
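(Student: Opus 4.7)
The plan is to establish the two-sided inequality between $\Theta_\sigma(u)$ and $\Lip_\sigma(u)$, which automatically yields the set equality $\Lambda_\sigma(\T^d)=\D{C}^{0,\sigma}(\T^d)$.

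For the easy direction, I would show $\Theta_\sigma(u)\leq 2\Lip_\sigma(u)$: if $u\in \D{C}^{0,\sigma}(\T^d)$, then for every $x,h\in\R^d$ the triangle inequality gives
\[u(x+h)+u(x-h)-2u(x)\leq |u(x+h)-u(x)|+|u(x-h)-u(x)|\leq 2\Lip_\sigma(u)\,|h|^\sigma,\]
which proves $\D{C}^{0,\sigma}(\T^d)\subset\Lambda_\sigma(\T^d)$.

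For the reverse direction, set $a:=\Theta_\sigma(u)$ and introduce
\[f(r):=\sup\{u(x)-u(y)\mid x,y\in\R^d,\ |x-y|=r\},\]
which is bounded by the oscillation of $u$ on $\T^d$ because $u\in\D{C}(\T^d)$. The key step is the recursion
\[f(r)\leq \tfrac12 f(2r)+\tfrac{a}{2}r^\sigma \quad\text{for every } r>0, \]
derived via a midpoint-reflection trick: for $x,y$ with $|x-y|=r$, set $z:=2y-x$, so that $y=\tfrac{x+z}{2}$ and $|x-z|=2r$; the defining inequality of $\Theta_\sigma$ applied at $y$ with increment $y-x$ gives $u(x)+u(z)-2u(y)\leq ar^\sigma$, which rearranges to
\[u(x)-u(y)\leq \tfrac12(u(x)-u(z))+\tfrac{a}{2}r^\sigma\leq \tfrac12 f(2r)+\tfrac{a}{2}r^\sigma,\]
and the supremum on the left yields the recursion.

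Iterating $n$ times produces
\[f(r)\leq 2^{-n}f(2^n r)+\tfrac{a}{2}r^\sigma\sum_{k=0}^{n-1}2^{k(\sigma-1)}.\]
Since $f$ is bounded, $2^{-n}f(2^n r)\to 0$ as $n\to\infty$, and the geometric series converges because $\sigma<1$, with sum $C_\sigma':=(1-2^{\sigma-1})^{-1}$. Letting $n\to\infty$ gives $f(r)\leq \tfrac{aC_\sigma'}{2}r^\sigma$, and since the supremum defining $f$ is symmetric in $x,y$, this is simultaneously a bound on $|u(x)-u(y)|$. Hence $\Lip_\sigma(u)\leq \tfrac{C_\sigma'}{2}\Theta_\sigma(u)$, proving $\Lambda_\sigma(\T^d)\subset \D{C}^{0,\sigma}(\T^d)$, and $C_\sigma:=\max\{2,\,C_\sigma'/2\}$ closes the argument. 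The substantive obstacle is identifying the correct recursive estimate: the midpoint trick trades a contraction factor $1/2$ for a doubling of the radius, and the condition $\sigma<1$ is exactly what makes the geometric sum of the $r^\sigma$-errors finite, consistent with the fact that at the endpoint $\sigma=1$ the Zygmund class strictly contains $\Lip$.
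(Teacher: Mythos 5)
Your proposal is correct and follows essentially the same route as the paper: both directions use the triangle-inequality bound $\Theta_\sigma(u)\leq 2\Lip_\sigma(u)$ for the easy inclusion, and for the converse both iterate the midpoint (second-difference) inequality along a dyadically doubling chain of points, using the boundedness of the periodic function $u$ to kill the remainder and the convergence of the geometric series $\sum 2^{k(\sigma-1)}$ (which is where $\sigma<1$ enters), arriving at the same constant $\Lip_\sigma(u)\leq \Theta_\sigma(u)/(2-2^\sigma)$. Your recursion on the modulus $f(r)$ is merely a repackaging of the paper's explicit weighted telescoping sum.
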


\begin{proof} Let $u\in \D{C}^{0,\sigma}(\T^d)$ and $x,h\in\R^d$. Compute that
\[\begin{aligned}
u(x+h)+u(x-h)-2u(x)&\,=u(x+h)-u(x)+u(x-h)-u(x)
\\&\,\leq \Lip_\sigma(u)|h|^\sigma+\Lip_\sigma(u)|h|^\sigma=2\Lip_\sigma|h|^\sigma,
\end{aligned}
\]
which shows that 
\[
\Theta_\sigma(u)\leq 2\Lip_\sigma(u),
\]
and that $ \D{C}^{0,\sigma}(\T^d)\subset \Lambda_\sigma(\T^d)$. 

Now, let $u\in\Lambda_\sigma(\T^d)$, so that we have 
\begin{equation} \label{gs<1+1}
u(x)\geq \frac 12(u(x+h)+u(x-h)-\Theta_\sigma(u)|h|^\sigma) \ \ \text{ for all } x,h\in\R^d.
\end{equation}

Let $x,y\in\R^d$. We intend to show that 
\[
u(x)-u(y)\leq C|x-y|^\sigma
\]
for some constant $C>0$, depending only on $\Theta_\sigma(u)$ and $\sigma$. 

By translation, we may assume that $x=0$. 
We need to show that for some constant $C>0$, %independent of $u$ and $y$,
\begin{equation}\label{equiv1}
u(y)\geq u(0)-C|y|^\sigma.  
\end{equation}

We may assume that $y\not=0$. 
Set $K=\Theta_\sigma(u)$ and let $m\in\N$ large enough.  
By \eqref{gs<1+1}, we have 
\[
u(2^{k-1}y)\geq \frac 12(u(0)+u(2^ky)-K|2^{k-1}y|^\sigma) \ \ \text{ for } k=1,\ldots,m.
\]
From these, we obtain
\[\begin{aligned}
\sum_{k=1}^m 2^{-(k-1)}u(2^{k-1}y)&\,\geq 
\sum_{k=1}^m 2^{-k}\big[u(0)+u(2^{k}y)-K 2^{\sigma(k-1)}|y|^\sigma\big]
\\&\,=\frac{2^m-1}{2^m}u(0)+\sum_{k=1}^m 2^{-k}u(2^{k}y)
-K2^{-\sigma} \sum_{k=1}^m2^{(\sigma-1)k}|y|^\sigma.
\end{aligned}
\]
{After rewriting the first summation above  as $u(y)+\sum_{k=1}^{m-1} 2^{-k}u(2^{k}y)$, we get}
\[
\begin{aligned}
u(y)&\,\geq (1-2^{-m})u(0)+2^{-m}u(2^my)-K2^{-\sigma}2^{\sigma-1}\frac{1-2^{(\sigma-1)m}}{1-2^{\sigma-1}}\,|y|^\sigma
\\&\,\geq u(0)+2^{-m}(u(2^my)-u(0))-\frac{K}{2-2^{\sigma}}\,|y|^\sigma
\\&\,\geq u(0)-2^{-m}\operatorname{osc}(u)-\frac{K}{2-2^{\sigma}}\,|y|^\sigma,
\end{aligned}
\]
where $\operatorname{osc}(u):=\max_{\T^d} u-\min_{\T^d} u$.

Sending $m\to +\infty$ yields 
\[
u(y)\geq u(0)-\frac{\Theta_\sigma(u)}{2-2^\sigma}|y|^\sigma,
\]
which proves \eqref{equiv1}, with $C=\Theta_\sigma(u)/(2-2^\sigma)$. This readily shows that 
\[
|u(x)-u(y)|\leq \frac{\Theta_\sigma(u)}{2-2^\sigma}|x-y|^\sigma \ \ \text{ for all } x,y \in\R^d.
\]
Hence, we have  
\[
\Lip_\sigma(u)\leq \frac{1}{2-2^\sigma}\Theta_\sigma(u) \ \ \text{ and hence } \ \ \Lambda_\sigma(\T^d)\subset  \D{C}^{0,\sigma}(\T^d).
\]
The proof is now complete. 
\end{proof}

One can show that if $\sigma\in(1,\,2]$, then $\Lambda_\sigma(\T^d)\subset \D{C}^{0,1}(\T^d)$, 
which is left to the interested reader to check.

Let us now prove the equi--continuity of the functions $\{u_\tau\,:\, \tau\in (0,1)\,\}$. 
We start with the following result:

\begin{theorem} \label{main-ineq} %Assume \eqref{L3}. 
Let $u\in\Lambda_\gamma(\T^d)$. 
%Set $R_d:=\sqrt d/2$. 
Then
\[
\Theta_\gamma(\cF_\tau u)\leq \frac{1}{(1+\tau)^\gamma }\Theta_\gamma(u)+\tau M_\gamma
\left(1+\frac{R_d^{1-\gamma}}{1+\tau}\right). 
\]
\end{theorem}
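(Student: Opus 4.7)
By Lemma \ref{with R}, it suffices to bound
$\cF_\tau u(x+h)+\cF_\tau u(x-h)-2\cF_\tau u(x)$
for $x\in\R^d$ and $h\in\R^d$ with $|h|\leq R_d$. The plan is to exploit the min-representation of $\cF_\tau$ by using, at the points $x\pm h$, admissible competitors obtained from a minimizer at $x$ shifted by $\pm\mu h$ for a parameter $\mu\in[0,1]$ to be optimized at the end.

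Fix $x,h$ with $|h|\leq R_d$, and pick $y^*\in\R^d$ attaining $\cF_\tau u(x)$, i.e.\
\[
\cF_\tau u(x)=\tau L(x,q^*)+u(y^*),\qquad q^*:=\frac{x-y^*}{\tau}.
\]
For $\mu\in[0,1]$, using $y^*\pm\mu h$ as test points and writing
$k:=(1-\mu)h/\tau$, one has
\[
\frac{(x\pm h)-(y^*\pm\mu h)}{\tau}=q^*\pm k,
\]
so that the definition of $\cF_\tau$ yields
\[
\begin{aligned}
\cF_\tau u(x+h)+\cF_\tau u(x-h)-2\cF_\tau u(x)
&\le \tau\bigl[L(x+h,q^*+k)+L(x-h,q^*-k)-2L(x,q^*)\bigr]\\
&\quad+\bigl[u(y^*+\mu h)+u(y^*-\mu h)-2u(y^*)\bigr].
\end{aligned}
\]

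Now observe that $|\mu h|\le|h|\le R_d$ and $|k|=(1-\mu)|h|/\tau\le|h|/\tau$. For the choice $\mu=1/(1+\tau)$ one has $|k|=|h|/(1+\tau)\leq R_d$, so assumption (L3) applies to the first bracket and gives $\tau M_\gamma\bigl(|h|^\gamma+(1-\mu)|h|/\tau\bigr)$. The definition of $\Theta_\gamma(u)$ applied to $\mu h$ bounds the second bracket by $\Theta_\gamma(u)\mu^\gamma|h|^\gamma$. Using $|h|\le R_d$ in the form $(1-\mu)|h|\leq(1-\mu)R_d^{1-\gamma}|h|^\gamma$, the sum becomes
\[
\Bigl[\Theta_\gamma(u)\mu^\gamma+\tau M_\gamma+(1-\mu)M_\gamma R_d^{1-\gamma}\Bigr]|h|^\gamma.
\]

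Finally, substituting $\mu=1/(1+\tau)$ (which makes the $\Theta_\gamma(u)$-coefficient equal to $(1+\tau)^{-\gamma}$, the target value) gives
\[
\cF_\tau u(x+h)+\cF_\tau u(x-h)-2\cF_\tau u(x)\le\left[\frac{\Theta_\gamma(u)}{(1+\tau)^\gamma}+\tau M_\gamma\!\left(1+\frac{R_d^{1-\gamma}}{1+\tau}\right)\right]|h|^\gamma,
\]
and taking the infimum over admissible constants in the definition of $\Theta_\gamma(\cF_\tau u)$ yields the claim. The only non-routine point is reverse-engineering the optimal $\mu=1/(1+\tau)$; the rest consists in carefully checking the size constraints $|\mu h|,|k|\le R_d$ needed to apply (L3) and the defining inequality of $\Theta_\gamma(u)$.
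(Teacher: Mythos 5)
Your argument is correct and is essentially identical to the paper's proof: you use the minimizer $y^*$ for $\cF_\tau u(x)$, test $\cF_\tau u(x\pm h)$ with $y^*\pm h/(1+\tau)$ (your $\mu=1/(1+\tau)$ corresponds exactly to the paper's choice $k=h/(1+\tau)$), split via (L3) and $\Theta_\gamma(u)$, use $|h|\leq R_d$ to absorb the linear term, and conclude with Lemma \ref{with R}. The only difference is cosmetic (optimizing over the parameter $\mu$ rather than writing the shift directly), so nothing further is needed.
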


\begin{proof}
Let $x\in \R^d$ and choose $y\in\R^d$ so that 
\[
\cF_\tau u(x)=u(y)+\tau L\Big(x,\frac{x-y}{\tau}\Big).
\]
For every $h,k\in\R^d$ we get 
\[ \begin{aligned}
&\cF_\tau u(x+h)+\cF_\tau u(x-h)-2\cF_\tau u(x)
\\&\leq u(y+k)+\tau L\Big(x+h, \frac{x+h-(y+k)}{\tau}\Big)
+u(y-k)+\tau L\Big(x-h,\frac{x-h-(y-k)}\tau\Big)
\\&\quad -2u(y)-2\tau L\Big(x,\frac{x-y}{\tau}\Big)
\\&
\leq
\Theta_\gamma(u)|k|^\gamma 
+
\tau\left[
L\Big(x+h, \frac{x-y}{\tau}+\frac{h-k}{\tau}\Big)
+
L\Big(x-h, \frac{x-y}{\tau}-\frac{h-k}{\tau}\Big)
-
2\tau L\Big(x,\frac{x-y}{\tau}\Big)
\right]
\end{aligned}
\]
In order to exploit \eqref{L3}, we take $h\in \overline B_{R_d}$ and $k:=h/(1+\tau)$, so that $\left|\frac{h-k}{\tau}\right|=\left|\frac{h}{1+\tau}\right|<R_d$. 
We infer 
\[ \begin{aligned}
&\cF_\tau u(x+h)+\cF_\tau u(x-h)-2\cF_\tau u(x)
\leq 
\Theta_\gamma(u)\frac{|h|^\gamma}{(1+\tau)^\gamma}+ \tau M_\gamma
 \left(|h|^\gamma +\frac{|h|}{1+\tau}\right),
\end{aligned}
\] 
which implies
\[
\Theta_{\gamma,R_d}(\cF_\tau u)
\leq 
\frac 1{(1+\tau)^\gamma}\Theta_\gamma(u)
+\tau M_\gamma\left(1+\frac{R_d^{1-\gamma}}{1+\tau}
\right). %\qedhere 
\]
The assertion follows in view of Lemma \ref{with R}. 
\end{proof}

\begin{lemma} \label{heat} Let $u\in\Lambda_\sigma(\T^d)$, with $\sigma\in(0,\,2]$.  We have 
\[
\Theta_\sigma(\cE_\tau u)\leq \Theta_\sigma(u). 
\]
\end{lemma}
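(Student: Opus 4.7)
The plan is to exploit that $\eta^\tau$ is a nonnegative kernel with total mass one, so that convolution against it preserves pointwise inequalities after integration. Concretely, I would write
\[
\cE_\tau u(x) = (\eta^\tau * u)(x) = \int_{\R^d} u(x-z)\,\eta^\tau(z)\,\dd z
\]
and then, for arbitrary $x,h\in\R^d$, use linearity of the integral to get
\[
\cE_\tau u(x+h)+\cE_\tau u(x-h)-2\cE_\tau u(x)=\int_{\R^d}\big[u(x+h-z)+u(x-h-z)-2u(x-z)\big]\,\eta^\tau(z)\,\dd z.
\]

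The key observation is that for each fixed $z\in\R^d$, the definition of $\Theta_\sigma(u)$ applied at the base point $x-z$ with increment $h$ yields
\[
u(x+h-z)+u(x-h-z)-2u(x-z)\leq \Theta_\sigma(u)\,|h|^\sigma.
\]
Since $\eta^\tau(z)\geq 0$ and $\int_{\R^d}\eta^\tau(z)\,\dd z=1$, integrating this pointwise bound against $\eta^\tau$ gives
\[
\cE_\tau u(x+h)+\cE_\tau u(x-h)-2\cE_\tau u(x)\leq \Theta_\sigma(u)\,|h|^\sigma,
\]
for every $x,h\in\R^d$. Taking the infimum over admissible constants $a\geq 0$ in the definition of $\Theta_\sigma$ then gives $\Theta_\sigma(\cE_\tau u)\leq \Theta_\sigma(u)$, which is the desired inequality.

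There is essentially no obstacle here beyond ensuring the convolution is well-defined and preserves $\Z^d$-periodicity (so that $\cE_\tau u\in \D{C}(\T^d)$), which follows immediately from $u\in \D{C}(\T^d)$ and the fact that $\eta^\tau\in L^1(\R^d)$. No appeal to Lemma \ref{with R} or to the case analysis on $\sigma\in(0,1)$ vs.\ $\sigma\in(1,2]$ is required, since the argument works uniformly for all $\sigma\in(0,2]$. The crucial structural feature being used is that $\eta^\tau$ is a probability density, and any nonnegative kernel of unit mass would give the same contractive estimate on the second-difference seminorm $\Theta_\sigma$.
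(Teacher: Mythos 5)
Your argument is correct and is essentially identical to the paper's proof: both expand the second difference of $\cE_\tau u$ as a convolution of the second differences of $u$ against the nonnegative unit-mass kernel $\eta^\tau$ and integrate the pointwise bound $\Theta_\sigma(u)|h|^\sigma$. Nothing further is needed.
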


\begin{proof} Let $x,\,h\in\R^d$. 
We compute that
\[\begin{aligned}
&\cE_\tau u(x+h)+\cE_\tau u(x-h)-2 \cE_\tau u(x)
\\&=\int_{\R^d} \eta^\tau(y)\Big(u(x-y +h)+u(x-y-h)-2u(x-y)\Big) dy
\\&\leq  \int_{\R^d}\eta^\tau(z)\Theta_\sigma(u) |h|^\sigma dz
=\Theta_\sigma(u)|h|^\sigma,
\end{aligned}
\]
which yields
\[
\Theta_\sigma(\cE_\tau u)\leq \Theta_\sigma(u). %\qedhere
\]
\end{proof}

\begin{theorem} \label{cL} %Assume \eqref{L3}. 
Let $u\in\Lambda_\gamma(\T^d)$. Then
\[
\Theta_\gamma(\cL_\tau u)\leq \frac{1}{(1+\tau)^\gamma }\Theta_\gamma(u)+\tau M_\gamma
\left(1+\frac{R_d^{1-\gamma}}{1+\tau}
 \right). 
\]
\end{theorem}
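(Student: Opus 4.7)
The plan is to exploit directly the factorization $\cL_\tau=\cF_\tau\circ\cE_\tau$ noted at the beginning of this section, together with the two estimates already established: Theorem \ref{main-ineq} which controls $\Theta_\gamma(\cF_\tau v)$ in terms of $\Theta_\gamma(v)$, and Lemma \ref{heat} which shows that the heat--kernel convolution $\cE_\tau$ does not increase the modulus $\Theta_\gamma$.

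More precisely, I would first check that $\cE_\tau u \in \Lambda_\gamma(\T^d)$: this follows from Lemma \ref{heat} applied with $\sigma=\gamma$, giving $\Theta_\gamma(\cE_\tau u)\leq \Theta_\gamma(u)<\infty$. Since $\Lambda_\gamma(\T^d)$ is the correct domain for Theorem \ref{main-ineq}, I can then apply the latter result to the function $v:=\cE_\tau u$, obtaining
\[
\Theta_\gamma(\cF_\tau v)\leq \frac{1}{(1+\tau)^\gamma}\Theta_\gamma(v)+\tau M_\gamma\left(1+\frac{R_d^{1-\gamma}}{1+\tau}\right).
\]
Substituting $v=\cE_\tau u$ and using the bound $\Theta_\gamma(\cE_\tau u)\leq \Theta_\gamma(u)$ on the right--hand side (note that the coefficient $1/(1+\tau)^\gamma$ is non--negative, so the monotonicity of the inequality is preserved) yields the claimed bound, because $\cL_\tau u=\cF_\tau(\cE_\tau u)$.

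There is no real obstacle here: all the work has been packaged into the two preceding results, and the statement is just their composition. The only point to be mindful of is that Theorem \ref{main-ineq} requires its input to lie in $\Lambda_\gamma(\T^d)$, and this is precisely what Lemma \ref{heat} guarantees for $\cE_\tau u$ as soon as $u\in\Lambda_\gamma(\T^d)$, so the chain of inequalities is legitimate.
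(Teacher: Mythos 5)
Your argument is correct and is essentially identical to the paper's proof: both use the factorization $\cL_\tau=\cF_\tau\circ\cE_\tau$, apply Lemma \ref{heat} to get $\Theta_\gamma(\cE_\tau u)\leq\Theta_\gamma(u)$ (hence $\cE_\tau u\in\Lambda_\gamma(\T^d)$), and then apply Theorem \ref{main-ineq} to $\cE_\tau u$. Nothing is missing.
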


\begin{proof} By Theorem \ref{main-ineq} and Lemma \ref{heat}, we obtain 
\begin{align*}
\Theta_\gamma(\cL_\tau u)
&\,=\Theta_\gamma(\cF_\tau\circ \cE_\tau u)
\\&\,\leq\frac{1}{(1+\tau)^\gamma }\Theta_\gamma(\cE_\tau u)+\tau M_\gamma
\left(1+\frac{R_d^{1-\gamma}}{1+\tau} \right)
\\&\,\leq \frac{1}{(1+\tau)^\gamma }\Theta_\gamma(u)+\tau M_\gamma
\left(1+\frac{R_d^{1-\gamma}}{1+\tau} \right). %\qedhere
\end{align*}
\end{proof}

% with previous condition (l3)
%
%\begin{theorem} \label{cL} %Assume \eqref{L3}. 
%Let $u\in\Lambda_\gamma(\T^d)$. 
%Set $R_d=\sqrt d/2$. Then
%\[
%\Theta_\gamma(\cL_\tau u)\leq \frac{1}{(1+\tau)^\gamma }\Theta_\gamma(u)+\tau M_\gamma 
%\left(1+\frac{R_d^{2-\gamma}}{(1+\tau)^2}\right). 
%\]
%\end{theorem}
%
%\begin{proof} By Theorem \ref{main-ineq} and Lemma \ref{heat}, we obtain 
%\begin{align*}
%\Theta_\gamma(\cL_\tau u)
%&\,=\Theta_\gamma(\cF_\tau\circ \cE_\tau u)
%\\&\,\leq\frac{1}{(1+\tau)^\gamma }\Theta_\gamma(\cE_\tau u)+\tau M_\gamma 
%\left(1+\frac{R_d^{2-\gamma}}{(1+\tau)^2}\right)
%\\&\,\leq \frac{1}{(1+\tau)^\gamma }\Theta_\gamma(u)+\tau M_\gamma 
%\left(1+\frac{R_d^{2-\gamma}}{(1+\tau)^2}\right). %\qedhere
%\end{align*}
%\end{proof}

\begin{lemma} \label{regularizing} Let $u\in \D{C}(\T^d)$ and $\sigma\in(0,\,2]$. Then
\[
\cE_\tau u\in\Lambda_\sigma(\T^d).
\]
\end{lemma}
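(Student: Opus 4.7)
The plan is to exploit the $C^\infty$-smoothing property of the heat kernel. Since $\eta^\tau$ is of class $C^\infty$ on $\R^d$ with all derivatives in $L^1$, and $u$ is bounded and continuous (being $\Z^d$-periodic), the convolution $\cE_\tau u=\eta^\tau*u$ is of class $C^\infty$ on $\R^d$; by periodicity, each of its derivatives is bounded. In particular, $M:=\|D^2\cE_\tau u\|_\infty<\infty$.

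Next, I apply the second-order Taylor formula to $\cE_\tau u$ in the directions $\pm h$ at the point $x$. The linear terms cancel and yield
\[
\cE_\tau u(x+h)+\cE_\tau u(x-h)-2\cE_\tau u(x)\leq M|h|^2 \qquad\text{for all } x,h\in\R^d.
\]
For $\sigma\in(0,\,2]$ and $|h|\leq R_d$, the elementary inequality $|h|^2\leq R_d^{2-\sigma}|h|^\sigma$ then gives $\Theta_{\sigma,R_d}(\cE_\tau u)\leq M R_d^{2-\sigma}<\infty$. Invoking Lemma \ref{with R} (whose proof actually only needs the finiteness of $\Theta_{\sigma,R}$ for some $R\geq R_d$, together with the $\Z^d$-periodicity of the map $h\mapsto\cE_\tau u(x+h)+\cE_\tau u(x-h)-2\cE_\tau u(x)$, to promote the local bound into a global one), I conclude that $\Theta_\sigma(\cE_\tau u)<\infty$, i.e.\ $\cE_\tau u\in\Lambda_\sigma(\T^d)$.

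No substantial obstacle arises: the whole argument rests on the trivial fact that any bounded $C^2$ function has second differences of size at most $\|D^2\|_\infty|h|^2$, which on any bounded region dominates $|h|^\sigma$ for $\sigma\leq 2$. The only mildly delicate point is the reliance on Lemma \ref{with R}; should one prefer, its short argument can be repeated directly in the present setting, using that $|h|_\infty\leq 1/2$ on a fundamental domain of $\T^d$.
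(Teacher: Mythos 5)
Your argument is correct and is essentially the paper's own: the paper likewise notes that $\cE_\tau u$ is smooth and periodic with bounded second derivatives, hence $\Theta_2(\cE_\tau u)<\infty$, and then passes from $\Lambda_2(\T^d)$ to $\Lambda_\sigma(\T^d)$ via Corollary \ref{inclusion}, whose short proof is exactly your combination of the inequality $|h|^2\leq R_d^{2-\sigma}|h|^\sigma$ for $|h|\leq R_d$ with Lemma \ref{with R}. You have simply unwound that corollary inline (and your remark about how Lemma \ref{with R} is being used to pass from $\Theta_{\sigma,R_d}<\infty$ to $\Theta_\sigma<\infty$ matches the way the paper itself invokes it).
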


\begin{proof} Set 
\[
v_\tau(x)=\cE_\tau u(x) \ \ \text{ for } x\in\T^d.
\]
As is well-known (and easily shown), the function $v_\tau$ is smooth and periodic in $\R^d$. 
In particular, the second derivatives of $v_\tau$ are bounded in $\R^d$, which 
implies that $v_\tau$ is semi-concave in $\R^d$, that is, $\Theta_2(\cE_\tau u)<\infty$.  
Thus, we find that $\cE_\tau u\in\Lambda_2(\T^d)$ and, due to Corollary \ref{inclusion},  
that $\cE_\tau u\in\Lambda_\sigma(\T^d)$ for all $\sigma\in(0,\,2]$. 
\end{proof}

\begin{theorem} \label{main-est} %Assume \eqref{gamma-con}. 
Let $\tau>0$ and $u_\tau\in \D{C}(\T^d)$ satisfy \eqref{eq u tau}. 
%be a solution of \begin{equation}\label{key-eq}
%\cL u_\tau=u_\tau+c_\tau \ \ \text{ in } \T^d,
%\end{equation}
%where $c_\tau$ is a constant. 
Then 
\[
\Theta_\gamma (u_\tau)\leq \frac{\tau B_\tau}{1-A_\tau},
\]
where 
\[
A_\tau=\frac{1}{(1+\tau)^\gamma},\quad 
B_\tau=M_\gamma\left(1+\frac{R_d^{1-\gamma}}{1+\tau} \right) \ \ \text{ and } \ \ R_d=\frac{\sqrt d}{2}.
\]
\end{theorem}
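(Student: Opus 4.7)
The strategy is to apply the contraction-type estimate of Theorem \ref{cL} directly to $u_\tau$, using the fixed point equation \eqref{eq u tau} and the invariance of $\Theta_\gamma$ under additive constants. The only subtle point is to guarantee \emph{a priori} that $\Theta_\gamma(u_\tau)<\infty$, so that the resulting inequality can be rearranged without risking an $\infty \leq \infty$ triviality.

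First, I would establish that $u_\tau\in\Lambda_\gamma(\T^d)$. This is achieved by reading the fixed point equation as
\[
u_\tau=\cL_\tau u_\tau+\tau\alpha_\tau=\cF_\tau(\cE_\tau u_\tau)+\tau\alpha_\tau.
\]
By Lemma \ref{regularizing}, $\cE_\tau u_\tau\in\Lambda_2(\T^d)$, and by Corollary \ref{inclusion} this yields $\cE_\tau u_\tau\in\Lambda_\gamma(\T^d)$ with $\Theta_\gamma(\cE_\tau u_\tau)<\infty$. Theorem \ref{main-ineq} then provides a finite bound for $\Theta_\gamma(\cF_\tau(\cE_\tau u_\tau))$. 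Since $\Theta_\gamma$ is manifestly invariant under addition of a constant, we conclude $\Theta_\gamma(u_\tau)<\infty$.

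Next, observe that the fixed-point identity $\cL_\tau u_\tau=u_\tau-\tau\alpha_\tau$ implies
\[
\Theta_\gamma(\cL_\tau u_\tau)=\Theta_\gamma(u_\tau-\tau\alpha_\tau)=\Theta_\gamma(u_\tau),
\]
again by the invariance of $\Theta_\gamma$ under additive constants (which follows directly from its definition). Applying Theorem \ref{cL} to $u_\tau\in\Lambda_\gamma(\T^d)$ yields
\[
\Theta_\gamma(u_\tau)=\Theta_\gamma(\cL_\tau u_\tau)\leq A_\tau\,\Theta_\gamma(u_\tau)+\tau B_\tau.
\]
Because $\tau>0$ and $\gamma\in(0,1)$, we have $A_\tau=(1+\tau)^{-\gamma}<1$, so the term $A_\tau\Theta_\gamma(u_\tau)$ may be absorbed to the left-hand side (this is where finiteness of $\Theta_\gamma(u_\tau)$ is essential). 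Dividing by $1-A_\tau>0$ gives
\[
\Theta_\gamma(u_\tau)\leq\frac{\tau B_\tau}{1-A_\tau},
\]
as claimed.

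The only conceptual hurdle is the finiteness step: without it, the inequality $\Theta_\gamma(u_\tau)\leq A_\tau\Theta_\gamma(u_\tau)+\tau B_\tau$ cannot be used to conclude anything nontrivial. The smoothing property of the heat kernel in Lemma \ref{regularizing} precisely circumvents this obstacle by providing a finite bound on $\Theta_\gamma(\cE_\tau u_\tau)$ purely from $u_\tau\in C(\T^d)$, and thus on $\Theta_\gamma(\cL_\tau u_\tau)=\Theta_\gamma(u_\tau)$ via Theorem \ref{main-ineq}. Once this is in place the estimate follows from a one-line algebraic manipulation.
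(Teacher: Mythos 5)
Your proof is correct and follows essentially the same route as the paper: Lemma \ref{regularizing} together with Theorem \ref{main-ineq} (and the invariance of $\Theta_\gamma$ under additive constants) to ensure $u_\tau\in\Lambda_\gamma(\T^d)$, then Theorem \ref{cL} applied through the fixed-point equation, and finally absorption of $A_\tau\Theta_\gamma(u_\tau)$ using $A_\tau<1$. Your explicit emphasis on the finiteness of $\Theta_\gamma(u_\tau)$ is exactly the point the paper's first line handles implicitly.
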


We remark that, in the theorem above, \ $0<A_\tau<1$, 
\[
\lim_{\tau\to 0}\frac{\tau}{1-A_\tau}=\frac{1}{\gamma},
\]
and for any $0<T<\infty$, 
\[
\sup_{0<\tau\leq T}\frac{\tau B_\tau}{1-A_\tau}<\infty.
\]

\begin{proof} Using Lemma \ref{regularizing} and Theorem \ref{main-ineq}, we infer 
from \eqref{eq u tau} that 
\[
u_\tau=\cF_\tau\circ \cE_\tau u_\tau+\tau\alpha_\tau\in \Lambda_\gamma(\T^d).
\]
By \eqref{eq u tau} and Theorem \ref{cL}, we get 
\[
\Theta_\gamma(u_\tau)=\Theta_\gamma(u_\tau-\tau \alpha_\tau)
=\Theta_\gamma(\cL_\tau u_\tau)
\leq A_\tau \Theta_\gamma(u_\tau)+\tau B_\tau,
\]
from which follows
\[
\Theta_\gamma(u_\tau)\leq \frac{\tau B_\tau}{1-A_\tau}.  %\qedhere
\]
\end{proof}

% with previous condition (l3)
%
%
%\begin{theorem} \label{main-est} %Assume \eqref{L3}. 
%Let $\tau>0$ and $u_\tau\in C(\T^d)$ satisfy \eqref{eq u tau}. 
%%be a solution of \begin{equation}\label{key-eq}
%%\cL u_\tau=u_\tau+c_\tau \ \ \text{ in } \T^d,
%%\end{equation}
%%where $c_\tau$ is a constant. 
%Then 
%\[
%\Theta_\gamma (u_\tau)\leq \frac{\tau B_\tau}{1-A_\tau},
%\]
%where 
%\[
%A_\tau=\frac{1}{(1+\tau)^\gamma},\quad 
%B_\tau=M_\gamma \left(1+\frac{R_d^{\gamma-2}}{(1+\tau)^2}\right) \ \ \text{ and } \ \ R_d=\frac{\sqrt d}{2}.
%\]
%\end{theorem}
%
%We remark that, in the theorem above, \ $0<A_\tau<1$, 
%\[
%\lim_{\tau\to 0}\frac{\tau}{1-A_\tau}=\frac{1}{\gamma},
%\]
%and for any $0<T<\infty$, 
%\[
%\sup_{0<\tau\leq T}\frac{\tau B_\tau}{1-A_\tau}<\infty.
%\]
%
%
%
%\begin{proof} Using Lemma \ref{regularizing} and Theorem \ref{main-ineq}, we infer 
%from \eqref{eq u tau} that 
%\[
%u_\tau=\cF_\tau\circ \cE_\tau u_\tau+\tau\alpha_\tau\in \Lambda_\gamma(\T^d).
%\]
%By \eqref{eq u tau} and Theorem \ref{cL}, we get 
%\[
%\Theta_\gamma(u_\tau)=\Theta_\gamma(u_\tau-\tau \alpha_\tau)
%=\Theta_\gamma(\cL_\tau u_\tau)
%\leq A_\tau \Theta_\gamma(u_\tau)+\tau B_\tau,
%\]
%from which follows
%\[
%\Theta_\gamma(u_\tau)\leq \frac{\tau B_\tau}{1-A_\tau}.  %\qedhere
%\]
%\end{proof}
%

As a consequence of the information gathered, we derive the following fact:

\begin{prop}\label{equi-con}
{The family of functions $\{u_\tau\mid \tau\in (0,\,1)\}$ is equi-continuous on $\T^d$.}  
%   For  $u_\tau\in C(\T^d)$, $\tau>0$,  the functions 
% as in Theorem \ref{teo asymptotic}, we have that
% the family,  
\end{prop}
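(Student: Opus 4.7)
The plan is to combine the quantitative $\Theta_\gamma$-estimate of Theorem \ref{main-est} with the equivalence between $\Lambda_\gamma(\T^d)$ and $\D{C}^{0,\gamma}(\T^d)$ furnished by Proposition \ref{prop-equiv}, so as to show that the family $\{u_\tau\mid\tau\in(0,1)\}$ is in fact equi-H\"older continuous with exponent $\gamma$.

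First, I would observe that Theorem \ref{main-est} gives
\[
\Theta_\gamma(u_\tau)\leq \frac{\tau B_\tau}{1-A_\tau}, \qquad \tau\in(0,1),
\]
with $A_\tau=(1+\tau)^{-\gamma}$ and $B_\tau=M_\gamma\bigl(1+\tfrac{R_d^{1-\gamma}}{1+\tau}\bigr)$. The factor $B_\tau$ is trivially bounded on $(0,1)$ by $M_\gamma(1+R_d^{1-\gamma})$. For the ratio $\tau/(1-A_\tau)$, one checks (say by l'H\^opital or by writing $1-(1+\tau)^{-\gamma}=\gamma\tau+o(\tau)$ as $\tau\to 0^+$) that the map $\tau\mapsto \tau/(1-A_\tau)$ extends continuously to $[0,1]$ with value $1/\gamma$ at $\tau=0$, hence is bounded on $(0,1)$. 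Thus there exists a finite constant $\Theta^*:=\sup_{\tau\in(0,1)}\Theta_\gamma(u_\tau)<\infty$.

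Next, invoking Proposition \ref{prop-equiv}, from $\Theta_\gamma(u_\tau)\leq \Theta^*$ I obtain
\[
\Lip_\gamma(u_\tau)\leq C_\gamma\,\Theta_\gamma(u_\tau)\leq C_\gamma\,\Theta^*\qquad\hbox{for all $\tau\in(0,1)$,}
\]
so that every $u_\tau$ is $\gamma$-H\"older continuous with the same constant. This uniform H\"older bound immediately yields the desired equi-continuity on $\T^d$.

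There is essentially no hard step here: the structural work is already done in Theorems \ref{main-ineq}, \ref{cL}, \ref{main-est} and in Proposition \ref{prop-equiv}. The only mildly delicate point is the elementary verification that the constant $\tau B_\tau/(1-A_\tau)$ does not blow up as $\tau\to 0^+$, which I would present as a one-line Taylor expansion of $(1+\tau)^{-\gamma}$ around $\tau=0$.
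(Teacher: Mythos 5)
Your argument is correct and is exactly the paper's route: the authors also deduce equi-continuity directly from Theorem \ref{main-est} combined with Proposition \ref{prop-equiv}, the boundedness of $\tau B_\tau/(1-A_\tau)$ on $(0,1)$ being noted in the remark following Theorem \ref{main-est}. Your write-up simply makes explicit the uniform $\gamma$-H\"older bound that the paper leaves implicit.
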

  \begin{proof}
    It is a direct consequence of Theorem \ref{main-est}  and Proposition \ref{prop-equiv} 
  \end{proof}

\section{The approximation result}
\subsection{Proof of Theorem \ref{teo asymptotic}.} \label{sec:proof-main-result}
%\subsection{Conditions on the Hamiltonian}\label{sez LH}
This section is devoted to the proof of Theorem \ref{teo asymptotic}. We begin by showing that, under assumptions {\bf (L1)}, {\bf (L2)}, {\bf (L3)} on 
the Lagrangian $L$, Theorem \ref{teo eq viscous HJ} applies.

\begin{proposition}\label{H-Holder} Assume that $L$ satisfies {\bf (L1)},\,{\bf (L2)},\,\eqref{L3}. 
Then the associated Hamiltonian $H$ satisfies conditions  {\bf (H1)},{\bf\,(H2)},{\bf\,(H3)}.
% , i.e. there exists 
%   a constant  {$K=K(\gamma)>0$} such that
% \[
% |H(x,p)-H(y,p)|\le K|x-y|^\ga\quad\hbox{ for any } x,y\in\T^d,p\in\R^d.    
% \]
\end{proposition}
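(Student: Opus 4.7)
The properties (H1) and (H2) are classical consequences of the definition of $H$ as the Legendre-Fenchel transform of $L$. For (H1), convexity is automatic since $H(x,\cdot)$ is the pointwise supremum of the affine maps $p\mapsto \langle p,q\rangle - L(x,q)$. For (H2), I would argue that (L2) gives, for each $M>0$, a constant $C_M$ with $\sup_{x\in\T^d,\,|q|\leq M}L(x,q)\leq C_M$; then, choosing $q=Mp/|p|$ in the defining sup yields $H(x,p)\geq M|p|-C_M$, so $H(x,p)/|p|\to\infty$ uniformly in $x$.

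The substance of the proposition is (H3), and the plan is to use the previously developed machinery to convert the second-difference bound (L3) into a genuine H\"older estimate on $L$ in the $x$-variable, and then pass to $H$ by duality. Setting $k=0$ in (L3) gives, for every $q\in\R^d$ and every $x\in\R^d$, $h\in \overline{B}_{R_d}$,
\[
L(x+h,q)+L(x-h,q)-2L(x,q)\leq M_\gamma |h|^\gamma,
\]
i.e.\ $\Theta_{\gamma,R_d}(L(\cdot,q))\leq M_\gamma$ uniformly in $q$. Since $L(\cdot,q)\in \D{C}(\T^d)$ by periodicity, Lemma \ref{with R} upgrades this to $\Theta_\gamma(L(\cdot,q))\leq M_\gamma$, and Proposition \ref{prop-equiv} (whose Hölder constant depends only on $\gamma$) then gives a constant $K=K(\gamma)>0$, independent of $q$, such that
\[
|L(x,q)-L(y,q)|\leq K|x-y|^\gamma\qquad \text{for all } x,y\in\R^d,\ q\in\R^d.
\]

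The final step is the standard duality argument. Fix $x,y\in\T^d$ and $p\in\R^d$. By (L1)-(L2), the map $q\mapsto \langle p,q\rangle - L(x,q)$ attains its supremum at some $q^*\in\R^d$, so that $H(x,p)=\langle p,q^*\rangle - L(x,q^*)$, while $H(y,p)\geq \langle p,q^*\rangle - L(y,q^*)$ by definition. Subtracting gives
\[
H(x,p)-H(y,p)\leq L(y,q^*)-L(x,q^*)\leq K|x-y|^\gamma,
\]
and swapping $x$ and $y$ yields the reverse inequality, proving (H3). The one step that requires the previous results in a nontrivial way is the passage from the semi-concavity-type bound to a genuine Hölder bound on $L(\cdot,q)$ uniformly in $q$; everything else is routine Legendre-transform manipulation.
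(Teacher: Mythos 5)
Your proof is correct, and its first half coincides with the paper's: set $k=0$ in \eqref{L3} to get $\Theta_{\gamma,R_d}(L(\cdot,q))\leq M_\gamma$, use Lemma \ref{with R} to remove the restriction $|h|\leq R_d$, and then Proposition \ref{prop-equiv} to obtain a H\"older bound $|L(x,q)-L(y,q)|\leq C_\gamma M_\gamma |x-y|^\gamma$ uniform in $q$ (this is exactly the paper's constant $D_\gamma$). Where you diverge is the passage from $L$ to $H$. You take the direct route: the supremum defining $H(x,p)$ is attained at some $q^*$ (by continuity and \textbf{(L2)}), and comparing $H(x,p)=\langle p,q^*\rangle-L(x,q^*)$ with $H(y,p)\geq \langle p,q^*\rangle-L(y,q^*)$ gives $|H(x,p)-H(y,p)|\leq C_\gamma M_\gamma|x-y|^\gamma$ after swapping $x$ and $y$. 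The paper instead estimates the \emph{second difference} $H(x+h,p)+H(x-h,p)-2H(x,p)$, using the maximizers $q_\pm$ for $H(x\pm h,p)$, the H\"older bound on $L(\cdot,q)$, and the convexity of $L(x,\cdot)$ applied at the midpoint $v=\tfrac12(q_++q_-)$; this shows $\Theta_\gamma(H(\cdot,p))\leq 2D_\gamma$ and then requires a second application of Proposition \ref{prop-equiv} to convert back to a H\"older estimate, yielding the constant $2M_\gamma C_\gamma^2$. Your argument is more elementary: it avoids the convexity of $L(x,\cdot)$ and the second use of Proposition \ref{prop-equiv}, and it even produces a slightly better constant; the paper's version stays within the second-difference framework it develops, which is aesthetically consistent with Section \ref{sez equicontinuity} but not logically necessary here. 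The treatment of \textbf{(H1)} and \textbf{(H2)} is standard in both cases. One pedantic remark: Lemma \ref{with R} is stated for $u\in\Lambda_\sigma(\T^d)$, i.e.\ assuming $\Theta_\sigma(u)<\infty$ a priori, whereas you (like the paper) invoke it to deduce finiteness of $\Theta_\gamma$ from finiteness of $\Theta_{\gamma,R_d}$; the lemma's proof does establish exactly this implication, so nothing is lost, but it is worth being explicit about it.
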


\begin{proof}
The fact that $H$ satisfies {\bf (H1)},{\bf\,(H2)}, i.e. it is convex and superlinear, is standard. Let us prove {\bf (H3)}.  
For each $q\in\R^d$ the function $u_q(x)= L(x,q)$ belongs to $\Lam_\ga(\T^d)$ with
$\Theta_\ga(u_q)\le M_\gamma $. From Proposition \ref{prop-equiv} we get $u_q\in\D{C}^{0,\gamma}(\T^d)$ and
$\Lip_\ga(u_q)\le C_\ga M_\gamma :=D_\ga$. Thus
\[|L(x,q)-L(y,q)|\le D_\ga |x-y|^\ga \hbox{ for any }
x,y\in\T^d, q\in\R^d.\]
For $x,p,h\in\R^d$, let $q_\pm$ be such that
\[H(x\pm h,p)=pq_\pm-L(x,q_\pm)\]
and let $v=\frac 12(q_++q_-)$, $u=\frac 12(q_+-q_-)$, so that $q_\pm=v\pm u$.
Then
\begin{align*}
&  H(x+h,p)+H(x {-}h,p)-2H(x,p)\\
&\le p(v+u)+p(v-u)-2pv-L(x+h,v+u)-L(x-h,v-u)+2L(x,v)\\
  &=L(x,v+u)-L(x+h,v+u)+L(x,v-u)-L(x-h,v-u)\\
  &-L(x,v+u)-L(x,v-u)+2L(x,v)\\
&\le 2D_\ga|h|^\ga,
\end{align*}
where, for the last inequality, we have also exploited the convexity of $L(x,\cdot)$. 
Thus, for each $p\in\R^d$, the function $w_p(x)=H(x,p)$ belongs to
$\Lam_\ga(\T^d)$ with $\gT_\ga(w_p)\le 2D_\ga$, so
we have that $w_p\in\D{C}^{0,\gamma}(\T^d)$ with
$\Lip_\ga(w_p)\le 2 C_\ga D_\ga=2M_\gamma C_\ga^2$.
\end{proof}

Next, we prove an auxiliary lemma.

\begin{lemma}\label{lemma tool}
Let $\varphi\in\cont^{2}(\T^d)$. For every $R>0$, there exists a 
continuous function $\omega:[0,+\infty)\to [0,+\infty)$ vanishing at $0$, only depending on $R$ and $\varphi$, such that 
\[
\left|\frac{(\eta^\tau*\varphi)(x+\tau q) - \varphi(x)}{\tau}-\langle D\varphi(x),q\rangle -\Delta\varphi(x)\right|
\leq \omega(\tau)
\]
for all $(x,q)\in \T^d\times B_R$ and $\tau>0$.
\end{lemma}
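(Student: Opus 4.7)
The plan is to split the expression into a ``spatial translation'' piece and a ``heat semigroup at $x$'' piece, and treat each separately using only uniform continuity of the derivatives of $\varphi$. Specifically, I would write
\[
\frac{(\eta^\tau*\varphi)(x+\tau q) - \varphi(x)}{\tau}
=\underbrace{\frac{(\eta^\tau*\varphi)(x+\tau q) - (\eta^\tau*\varphi)(x)}{\tau}}_{\text{(I)}}
+\underbrace{\frac{(\eta^\tau*\varphi)(x) - \varphi(x)}{\tau}}_{\text{(II)}},
\]
and show that (I) converges to $\langle D\varphi(x),q\rangle$ and (II) to $\Delta\varphi(x)$, both uniformly in $(x,q)\in\T^d\times B_R$ as $\tau\to 0$.

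For (II), I would exploit that $u(t,x):=(\eta^t*\varphi)(x)$ is smooth on $(0,\infty)\times\R^d$ and satisfies the heat equation $\partial_t u=\Delta u$, together with the identity $\Delta(\eta^t*\varphi)=\eta^t*\Delta\varphi$ (valid since $\varphi\in C^2$). Integrating in time gives
\[
(\eta^\tau*\varphi)(x)-\varphi(x)
=\int_0^\tau(\eta^t*\Delta\varphi)(x)\,\dd t,
\]
so
\[
\left|\frac{(\eta^\tau*\varphi)(x)-\varphi(x)}{\tau}-\Delta\varphi(x)\right|
\leq \sup_{0\le t\le \tau}\|\eta^t*\Delta\varphi-\Delta\varphi\|_\infty,
\]
and the right-hand side tends to $0$ as $\tau\to 0$ because $\Delta\varphi\in\cont(\T^d)$ is uniformly continuous and $\{\eta^t\}$ is a standard approximation of the identity on $\T^d$.

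For (I), I would use that $\eta^\tau*\varphi$ is $C^\infty$ for every $\tau>0$, with $D(\eta^\tau*\varphi)=\eta^\tau*D\varphi$ and $D^2(\eta^\tau*\varphi)=\eta^\tau*D^2\varphi$, and in particular $\|D^2(\eta^\tau*\varphi)\|_\infty\leq \|D^2\varphi\|_\infty$ since $\eta^\tau$ is a probability density. A second-order Taylor expansion of $\eta^\tau*\varphi$ at $x$ along the increment $\tau q$ then gives
\[
(\eta^\tau*\varphi)(x+\tau q)-(\eta^\tau*\varphi)(x)
=\tau\langle (\eta^\tau*D\varphi)(x),q\rangle+O(\tau^2 R^2\|D^2\varphi\|_\infty),
\]
uniformly for $(x,q)\in\T^d\times B_R$. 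Dividing by $\tau$, the $O(\tau^2)$ remainder contributes $O(\tau)$, while $\eta^\tau*D\varphi\to D\varphi$ uniformly on $\T^d$ (again by the approximation-of-identity property, applied componentwise to the continuous function $D\varphi$). Combined, (I) differs from $\langle D\varphi(x),q\rangle$ by at most $R\|\eta^\tau*D\varphi-D\varphi\|_\infty+O(\tau)$, which vanishes uniformly.

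The modulus $\omega(\tau)$ may then be defined as the supremum of the absolute value of the expression on the left-hand side over $(x,q)\in\T^d\times B_R$; the two estimates above show $\omega(\tau)\to 0$, and replacing $\omega$ by a continuous majorant (e.g.\ $\tilde\omega(\tau):=\sup_{s\in(0,\tau]}\omega(s)+\tau$) gives a continuous, nondecreasing function with $\tilde\omega(0)=0$. No serious obstacle is expected: the only subtlety is keeping all estimates uniform in $x$ and in $q\in B_R$, which is automatic from compactness of $\T^d$ (giving uniform continuity of $D\varphi$ and $\Delta\varphi$) together with the bound $|q|\le R$ when invoking the $O(\tau^2 R^2)$ remainder.
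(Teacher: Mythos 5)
Your argument is correct, but it organizes the computation differently from the paper. The paper's proof sets $u(x,t)=(\eta^t*\varphi)(x)$ and applies the fundamental theorem of calculus along the single path $t\mapsto (x+tq,t)$: by the chain rule and the heat equation, $\frac{d}{dt}u(x+tq,t)=\langle D_xu(x+tq,t),q\rangle+\Delta u(x+tq,t)$, so the whole difference quotient becomes an average over $t\in[0,\tau]$ of $\langle D_xu,q\rangle+\Delta u$ evaluated at points close to $(x,0)$, and $\omega(r)$ is read off directly as the maximum over $\T^d\times B_R\times[0,r]$ of the deviation, using only continuity of $D_x^\alpha u$ ($|\alpha|\le 2$) up to $t=0$. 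You instead split into a spatial increment of $\eta^\tau*\varphi$ (handled by a second-order Taylor expansion together with the contraction $\|\eta^\tau*D^2\varphi\|_\infty\le\|D^2\varphi\|_\infty$ and the uniform convergence $\eta^\tau*D\varphi\to D\varphi$) and the semigroup increment at $x$ (handled by the identity $(\eta^\tau*\varphi)(x)-\varphi(x)=\int_0^\tau(\eta^t*\Delta\varphi)(x)\,\dd t$). Both routes rest on the same two ingredients, the heat equation and the approximate-identity property of $\eta^t$ on $\cont(\T^d)$, and both are uniform in $(x,q)\in\T^d\times B_R$; yours yields a slightly more explicit modulus, of the form $\sup_{0\le t\le\tau}\|\eta^t*\Delta\varphi-\Delta\varphi\|_\infty+R\|\eta^\tau*D\varphi-D\varphi\|_\infty+\tfrac12\tau R^2\|D^2\varphi\|_\infty$, whereas the paper's one-line path integral avoids the Taylor remainder (and hence any explicit use of $\|D^2\varphi\|_\infty$) altogether. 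Your final adjustment, replacing $\omega$ by a running supremum to guarantee continuity and monotonicity, is fine and is essentially what the paper builds in from the outset by defining $\omega(r)$ as a maximum over $t\in[0,r]$.
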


\def\tim{\times} \def\pl{\partial}

\begin{proof}
Let $\varphi\in\D{C}^2(\T^d)$ and set $u(x,t)=\eta^t*\varphi(x)$ for $(x,t)\in\R^d\tim(0+\infty)$ and $u(x,0)=\varphi(x)$ for $x\in\R^d$. It is a 
standard observation that  
for any multi-indices $\alpha$, with $|\alpha|\leq 2$, 
$D_x^\alpha u\in \D{C}(\R^d\tim[0+\infty))$ and $D_x^\alpha u(x,0)=D^\alpha\varphi(x)$
for all $x\in\R^d$, and that 
$\pl u/\pl t-\Delta u=0$ in $\R^d\tim[0+\infty)$. 
Observe that, for any $(x,q)\in \R^d\tim B_R$,
\[\begin{aligned}
\eta^\tau * \varphi(x+\tau q)-\varphi(x)&\,=\int_0^\tau\! \frac{d u}{dt}(x+t q,t)dt 
=\int_0^\tau\!\!  \left(\langle D_xu(x+tq,t),q\rangle+\frac{\pl u}{\pl t}(x+tq,t)\right) \! dt
\\&
\,=\int_0^\tau \big(\langle D_xu(x+tq,t),q\rangle+\Delta u(x+tq,t)\big)dt.
\end{aligned}\]
Now, setting 
\[
\omega(r)=\max_{(x,q,t)\in \R^d\tim B_R\tim [0,\,r]}\left|
\langle D_xu(x+tq,t)-D\varphi(x) ,q\rangle+\Delta u(x+tq,t)-\Delta \varphi(x)\right|,
\]
we have 
\[
\omega \in \D{C}([0+\infty)),\ \ \ \omega(0)=0, 
\]
and 
\[
\left|\frac{(\eta^\tau*\varphi)(x+\tau q) - \varphi(x)}{\tau}-\langle D\varphi(x),q\rangle -\Delta\varphi(x)\right|
\leq \omega(\tau)
\]
for all $(x,q)\in \T^d\times B_R$ and $\tau>0$.
\end{proof}

\begin{proof}[Proof of Theorem \ref{teo asymptotic}]
It follows from Proposition \ref{H-Holder} and Theorem \ref{teo eq viscous HJ} 
that there exists a unique pair $(\alpha,u)\in \R\times\cont(\T^d)$ with $u(x_0)=0$ such that $u$ is a viscosity solution to 
\begin{equation}\label{eq3 viscous HJ}
-\Delta u +H(x,Du)=\alpha\qquad\hbox{in $\T^d$}.
\end{equation}
In view of Proposition \ref{prop alpha_tau} and Proposition \ref{equi-con}, of the fact that $u_\tau(x_0)=0$ for all $\tau\in (0,1)$ and of Arzel\`a-Ascoli Theorem, 
we have that the set $\{(\alpha_\tau,u_\tau)\mid\tau\in (0,1)\}$ is precompact in $\R\times\cont(\T^d)$. 
In order to prove the assertion, it is therefore enough to show that, if the pair $(\alpha, u)$ is the limit of $(\alpha_{\tau_n},u_{\tau_n})$ in $\R\times\cont(\T^d)$ 
for some $\tau_n\to 0$, then $u$ is a solution to \eqref{eq3 viscous HJ}. 

Let us first show that such an $u$ is a viscosity subsolution to \eqref{eq3 viscous HJ}. Let $\varphi\in\cont^3(\T^d)$ be such that $u-\varphi$ has a strict maximum at $x_0$. Then there exists 
a sequence of points $(x_n)_n$ converging to $x_0$ in $\T^d$ such that $u_{\tau_n}-\varphi$ has a maximum at $x_n$. Let us set $\eps_n:=\max(u_{\tau_n}-\varphi)$ 
and $\varphi_n:=\varphi+\eps_n$. Then 
\[
 u_{\tau_n}\leq \varphi_n\quad\hbox{in $\T^d$}\qquad\hbox{and}\qquad u_{\tau_n}(x_n)=\varphi_n(x_n).
\]
By the monotone character of the operator $\Leg_{\tau_n}$ we infer 
\[
 \varphi_n(x_n)
 =
 u_{\tau_n}(x_n)
 =
 \Leg_{\tau_n}u_{\tau_n}(x_n)+\tau_n\alpha_{\tau_n}
 \leq
 \Leg_{\tau_n}\varphi_{n}(x_n)+\tau_n\alpha_{\tau_n},
 \]
hence, since $\varphi_n=\varphi+\eps_n$, 
\begin{equation}\label{eq1 asymptotic}
\frac{\varphi(x_n)-\Leg_{\tau_n}\varphi(x_n)}{\tau_n} \leq  \alpha_{\tau_n}
\end{equation}
By definition of $\Leg_{\tau_n}$, we infer that, for every fixed $q\in\R^d$, 
\[
\frac{\varphi(x_n)-\left(\eta^{\tau_n}*\varphi\right)(x_n+\tau_n q)}{\tau_n}-L(x_n,-q) \leq  \alpha_{\tau_n}
\]
By sending $n\to +\infty$ and by making use of Lemma \ref{lemma tool}, we end up with 
\[
 -\Delta \varphi(x_0)+\langle D\varphi(x_0),-q\rangle-L(x_0,-q) \leq \alpha.
\]
By taking the supremum of the above inequality with respect to $q\in\R^d$, we finally get, by the duality between $L$ and $H$, 
\[
 -\Delta\varphi(x_0)+H(x_0,D\varphi(x_0))\leq \alpha,
\]
thus showing that $u$ is a viscosity subsolution to \eqref{eq3 viscous HJ}. 

Let us now show that $u$ is a viscosity supersolution to \eqref{eq3 viscous HJ}. Let $\varphi\in\cont^3(\T^d)$ be such that $u-\varphi$ has a strict minimum at $x_0$. Then there exists 
a sequence of points $(x_n)_n$ converging to $x_0$ in $\T^d$ such that $u_{\tau_n}-\varphi$ has a minimum at $x_n$. Let us set $\eps_n:=\min(u_{\tau_n}-\varphi)$ 
and $\varphi_n:=\varphi+\eps_n$. Then 
\[
 u_{\tau_n}\geq \varphi_n\quad\hbox{in $\T^d$}\qquad\hbox{and}\qquad u_{\tau_n}(x_n)=\varphi_n(x_n).
\] 
By arguing analogously, we end up with 
\begin{equation*}
\frac{\varphi(x_n)-\Leg_{\tau_n}\varphi(x_n)}{\tau_n} \geq  \alpha_{\tau_n}
\end{equation*}
For each $n\in\N$, pick a minimizing $q_n\in\R^d$ for $\Leg_{\tau_n}\varphi(x_n)$, so that the previous inequality rereads as 
\begin{equation}\label{eq2 asymptotic}
\frac{\varphi(x_n)-\left(\eta^{\tau_n}*\varphi\right)(x_n+\tau_n q_n)}{\tau_n}-L(x_n,-q_n)\geq \alpha_{\tau_n}. 
\end{equation}
By making use of Proposition \ref{prop argmin L tau} and of the fact that 
$|D(\eta^\tau*\varphi)(x_n)|\leq \|D\varphi\|_\infty$, we infer that there exists $R>0$ such that $q_n\in B_R$ for every $n\in\N$. 
Up to extracting a further subsequence if necessary, we can assume that $q_n\to q$.  
Now we send $n\to +\infty$ in  \eqref{eq2 asymptotic} to get 
\[
 -\Delta \varphi(x_0)+\langle D\varphi(x_0),-q\rangle-L(x_0,-q) \geq \alpha.
\]
By the duality between $L$ and $H$, this implies   
\[
 -\Delta \varphi(x_0)+H(x_0, D\varphi(x_0)) \geq \alpha, 
\]
finally showing that $u$ is a viscosity supersolution to \eqref{eq3 viscous HJ}. 
\end{proof}
\subsection{Examples}\label{sez examples}  In this section, we exhibit some examples of Lagrangians for which the conclusion of Theorem \ref{teo asymptotic} holds true.\smallskip\\ 
\noindent{\bf Example 1:\quad}{\em $L\in \D{C}(\T^d\tim\R^d)$ satisfies {\bf (L1)}, {\bf (L2)} and $\min_{q\in\R^d} L(x,q)= c$ for all $x\in\T^d$ for some constant  $c\in\R$. 
% \begin{equation}\label{condition trivial}
% x\mapsto \min_{q\in\R^d} L(x,q)\quad\hbox{is constant.} 
% \end{equation}
}

This example includes the case when $L$ is independent of $x$, or the case $L(x,q)=a(x)|q|^m$ with $m\in (1,+\infty)$ and $a:\T^d\to (0,+\infty)$ Lipschitz continuous.  

In this case $\alpha_\tau=-c$ by Proposition \ref{prop alpha_tau} and $u_\tau\equiv 0$ for every $\tau>0$, so convergence of the $u_\tau$ trivially holds.\medskip   

\noindent{\bf Example 2:\quad}{\em $L(x,q)=L_0(q)+f(x)$\quad where $f\in \D{C}^{0,\gamma}(\T^d)$ with $\gamma\in (0,1)$ and $L_0\in \D{C}(\R^d)$ satisfies 
{\bf (L1)}, {\bf (L2)} and
\begin{equation}\label{condition semiconcavity}
D^2_q  L_0(q )\leq C_0 I_d \ \  \hbox{in}\  \R^d\setminus \overline B_{R_0} 
\end{equation}
in the sense of distributions, for some constants $C_0>0$ and $R_0>0$.}\smallskip

\indent This example includes the case $L_0(q ):=|q |^m$ with $m\in (1,2]$. Indeed, 
\[
DL_0(q )=m|q |^{m-2}q  \ \ \ \AND \ \ \ 
D^2L_0(q )=m|q |^{m-2}(I_d+(m-2)\bar q \otimes \bar q ) \leq m|q |^{m-2}I_d,
\]
where $\bar q =q /|q |$. The case $L_0(q ):=|q |^m+|q|$ with $m\in (1,2]$ is also included. 

\indent It is clear that $L$ satisfies {\bf (L1)} and  {\bf (L2)}. As for \eqref{L3}, first note that 
\[
f(x+h)+f(x-h)-2f(x)\leq |f(x+h)-f(x)|+|f(x-h)-f(x)|\leq 2\Lip_{\gamma}(f)|h|^\gamma.
\]
Condition \eqref{L3} is fulfilled in view of the following result:
%
%
%provided the function 
%$L_0(q ):=|q |^m$ satisfies
%\[
%L_0(q +k)+L_0(q -k)-2L_0(q ) \leq C(|k|+|k|^2) \ \ \FORALL k\in\R^d
%\]
%for some constant $\D{C}>0$. This holds true due to the following result. 

\begin{lemma} 
Let $L_0\in \D{C}(\R^d)$ satisfy conditions {\bf (L1)}, {\bf (L2)} and \eqref{condition semiconcavity} for some constants $C_0>0$ and $R_0>0$. 
Then, for every $A>0$, there exists a constant  $C_A>0$ such that  
\begin{equation}\label{claim ex2}
L_0(q +k)+L_0(q -k)-2L_0(q )\leq C_A|k| \ \  \FORALL (q,k)\in \R^d\times \overline B_A.
\end{equation}
\end{lemma}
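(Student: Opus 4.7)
\bigskip
\noindent\textbf{Proof proposal.} The plan is to split the argument into two regimes according to whether $q$ lies far from the ``bad'' ball $\overline{B_{R_0}}$ (where only convexity is available) or close to it, and in each regime to produce a bound of the stated form with a constant depending only on $A$, $R_0$, $C_0$, and $L_0$.

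First I would handle the outer regime $|q|>R_0+A$, where the entire segment $[q-k,q+k]$ sits inside the open set $\R^d\setminus\overline{B_{R_0}}$ on which $D^2L_0\le C_0 I_d$ holds only distributionally. Since $\R^d\setminus\overline{B_{R_0}}$ is not convex, one cannot directly subtract a quadratic and invoke concavity. I would resolve this by mollification: for a standard mollifier $\rho_\eps$ supported in $B_\eps$, the function $L_0^\eps:=L_0*\rho_\eps$ is smooth, and the distributional hypothesis applied to the translated mollifier as a test function gives $D^2L_0^\eps(q')\le C_0 I_d$ classically whenever $|q'|>R_0+\eps$. Choosing $\eps<|q|-R_0-A$, the whole segment $[q-k,q+k]$ (and a small convex tubular neighborhood of it) stays in $\{|q'|>R_0+\eps\}$, so the $C^2$ function $g(t):=L_0^\eps(q+tk)-\tfrac{C_0}{2}|q+tk|^2$ is concave on $[-1,1]$. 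The midpoint inequality $g(1)+g(-1)\le 2g(0)$ and the identity $\tfrac12(|q+k|^2+|q-k|^2)=|q|^2+|k|^2$ yield
\[
L_0^\eps(q+k)+L_0^\eps(q-k)-2L_0^\eps(q)\le C_0|k|^2.
\]
Sending $\eps\to 0^+$ and using the uniform convergence of $L_0^\eps$ to $L_0$ on compacts gives $L_0(q+k)+L_0(q-k)-2L_0(q)\le C_0|k|^2\le C_0 A|k|$. The boundary case $|q|=R_0+A$ follows by approximating with $q_n$ satisfying $|q_n|>R_0+A$ and invoking continuity.

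Next I would treat the inner regime $|q|\le R_0+A$. There $q,q\pm k$ all lie in the compact set $\overline{B_{R_0+2A}}$. Since $L_0$ is finite and convex on $\R^d$, it is locally Lipschitz, so there exists $K_A>0$ (depending only on $L_0$, $R_0$, $A$) such that
\[
|L_0(q+k)-L_0(q)|+|L_0(q-k)-L_0(q)|\le 2K_A|k|,
\]
which immediately gives $L_0(q+k)+L_0(q-k)-2L_0(q)\le 2K_A|k|$. Combining the two regimes with $C_A:=\max\{C_0 A,\,2K_A\}$ establishes \eqref{claim ex2}.

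The only nontrivial step is the outer regime, where the subtlety is that the semiconcavity hypothesis is distributional on a non-convex set; this is precisely what the mollification argument circumvents, by reducing to a classical concavity inequality on a convex tube around the segment and then passing to the limit.
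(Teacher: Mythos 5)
Your proposal is correct and follows essentially the same route as the paper: the same splitting at $|q|\le R_0+A$ versus $|q|>R_0+A$, the local Lipschitz bound from convexity on $\overline B_{R_0+2A}$ in the inner regime, mollification to upgrade the distributional Hessian bound to a pointwise one in the outer regime, and the combination $C_A=\max\{C_0A,\,2\tilde C_A\}$. The only cosmetic difference is that in the outer regime you conclude via concavity of $t\mapsto L_0^\eps(q+tk)-\tfrac{C_0}{2}|q+tk|^2$ and the midpoint inequality, whereas the paper uses a Taylor expansion with integral remainder for $L_\eps$; the two computations are equivalent.
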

\begin{proof} 
Fix $A>0$. By the fact that the function $L_0$ is convex and locally bounded, 
we infer that it is Lipschitz on every ball in $\R^d$.  
In particular, there exists a constant $\tilde C_A>0$ such that 
\[
|L_0(q )-L_0(\eta)|\leq \tilde C_A  |q -\eta| \ \ \FORALL q,\eta\in \overline B_{R_0+2A}. 
\] 
From this, we get 
\[
L_0(q \pm k)-L_0(q )\leq \tilde C_A |k|\ \  \FORALL (q,k)\in \overline B_{R_0+A}\times \overline B_A. 
\]
Adding these two yields
\begin{equation}\label{eq example 2}
L_0(q +k)+L_0(q -k)-2L_0(q )\leq 2\tilde C_A |k|\ \  \FORALL (q,k)\in \overline B_{R_0+A}\times \overline B_{A}. 
\end{equation}
Let $(x_0,q _0)\in\T^d\tim (\R^d\setminus \overline B_{R_0+A})$. 
By \eqref{condition semiconcavity} we have 
\begin{equation}\label{ex2 semiconcavity}
D_q ^2 L_0(q )\leq C_0I_d \ \ \FORALL q \in \overline B_A(q _0)
\end{equation}
in the distributional sense. Let $\big(\rho_\eps\big)_{\eps>0}$ be a family of smooth mollifiers and set 
\[
L_\eps(q):=\int_{\R^d} \rho_\eps(\xi)L_0(\xi-q) d\xi\qquad\hbox{for all $q\in\R^d$.}
\] 
The function $L_{\eps}$ is smooth and, for $\eps>0$ small enough, satisfies \eqref{ex2 semiconcavity} pointwise  with the same constant $C_0>0$. 
By the Taylor theorem, 
for any $k\in \overline B_A$ we have 
\begin{align*}
L_\eps(q _0\pm k)-L_\eps(q _0)&\,=\du{D_q  L_\eps(q _0),\pm k}
+\frac 12 \int_0^1 (1-t) \du{D_q ^2L_\eps(q_0 \pm tk) k, k}dt
\\&\,\leq \du{D_q  L_\eps(q _0),\pm k}
+\frac 12 \int_0^1 C_0 |k|^2 dt.
\end{align*}
Adding these two yields
\[
L_\eps(q _0+k)+L_\eps(q _0-k)-2L_\eps(q _0)\leq C_0|k|^2
\leq C_0 A|k|.
\]
By sending $\eps\to 0$ we conclude that 
\[
L_0(q +k)+L_0(q-k)-2L_0(q )
\leq C_0 A|k|\quad\FORALL (q,k)\in (\R^d\setminus \overline B_{R_0+A})\times \overline B_A. 
\]
This combined with \eqref{eq example 2} implies claim \eqref{claim ex2} 
%\[
%L_0(q +k)+L_0(q -k)-2L_0(q )
%\leq C_A |k| \ \ \FORALL (q ,k)\in \R^d\tim \overline B_A,
%\]
with $C_A:=\max\{C_0 A,\,2\tilde C_A\}$. 
\end{proof}

\bibliography{DiscreteHJ_8February2020}

\bibliographystyle{siam}

\end{document}